\documentclass{amsart}

\usepackage{hyperref}

%%%Para poder escribir tildes directamente:
\usepackage[utf8]{inputenc}
\usepackage[T1]{fontenc}

\usepackage{mathrsfs}

%% Paquetes de la AMS
\usepackage{amsmath, amsthm, amsfonts, amscd, amssymb}

\usepackage{commath}

%commutative diagrams
\usepackage[all,cmtip]{xy}

%commutative diagramas in tikz
\usepackage{tikz-cd}
 \usetikzlibrary{positioning}

%for inclusion arrows and floor notation
\usepackage{mathtools}

%Erdos \H{}
\usepackage[utf8]{inputenc}

\usepackage{pdfpages}

\newtheorem{lem}{Lemma}[section]
\newtheorem{teo}[lem]{Theorem}
\newtheorem{pro}[lem]{Proposition}
\newtheorem{cor}[lem]{Corollary}
\newtheorem{claim}[lem]{Claim}

\newtheorem*{rem*}{Remark}
\newtheorem*{teo*}{Theorem}

\newcounter{claimcounter}
\numberwithin{claimcounter}{lem}

%\documentclass[preview]{standalone}
%\usepackage[utf8]{inputenc}
%\usepackage{amsmath}

%tensor product
\newcommand{\tens}[1]{%
  \mathbin{\mathop{\otimes}\limits_{#1}}%
}

\DeclareMathOperator{\D}{\mathcal D}

\DeclareMathOperator{\im}{Im}

% Algebra

\DeclareMathOperator{\ab}{ab}

\DeclareMathOperator{\Tor}{Tor}

\newcommand{\Z}{\mathbb{Z}}
\newcommand{\F}{\mathbb{F}}

\newcommand{\Q}{\mathbb{Q}}

 %\newcommand {\D}{\mathcal {D}}

%fundamentals of mathematical analysis

\renewcommand{\hat}{\widehat}
\newcommand{\ti}{\widetilde}
\renewcommand{\tilde}{\widetilde}

\renewcommand{\bar}{\overline}
\renewcommand{\epsilon}{\varepsilon}

\newcommand{\lrar}{\longrightarrow}

\newcommand{\rar}{\rightarrow}

\newcommand{\lan}{\langle}
\newcommand{\ran}{\rangle}
\newcommand{\inc}{\xhookrightarrow{}}

\newcommand{\al}{\alpha}

\newcommand{\ga}{\gamma}
\newcommand{\Ga}{\Gamma}

\newcommand{\p}{\F_p}

\newcommand{\abr}{r_{\ab}}

\DeclareMathOperator{\ore}{ore}
\newcommand{\n}{\unlhd}

\newcommand{\hp}{\hat{p}}

%others

\newcommand{\sagecode}[1]{}

\newcommand{\FF}{{\bf F}}
\newcommand{\GG}{{\bf G}}

\newcommand{\NN}{{\bf N}}

\renewcommand{\subset}{\subseteq}
\newcommand{\sub}{\subseteq}

%HNN extension

 \date{\today}
 %\renewcommand{\familydefault}{\sfdefault}
%%% AMS subject classification
\subjclass[2010]{Primary: 20E06, Secondary:  16K40, 20C07, 20E18, 20E26}
 
 %20E06 Free products, free products with amalgamation, Higman-Neumann-Neumann extensions, and generalizations 
 %20E26 Residual properties and generalizations; residually finite groups 
 % 20C07 Group rings of infinite groups and their modules
 %16K40  Infinite-dimensional and general 
 
 \keywords{Parafree groups, free pro-$p$ groups,  universal division ring of fractions,  amalgamated free products, HNN extensions}
 
% $L^2$-Betti numbers, L\"uck approximation, 
 %Parafree amalgamated free products and HNN extensions   with cyclic base groups
\title[Parafree groups]{Parafree  graphs of groups with cyclic edge  groups}
\author{Andrei Jaikin-Zapirain}

\address{Departamento de Matem\'aticas, Universidad Aut\'onoma de Madrid \and  Instituto de Ciencias Matem\'aticas, CSIC-UAM-UC3M-UCM}
\email{andrei.jaikin@uam.es}
\author{Ismael Morales}
 
\address{Mathematical Institute, University of Oxford, Radcliffe Observatory, Andrew Wiles Building, Woodstock Rd, Oxford OX2 6GG}
\email{morales@maths.ox.ac.uk}

\begin{document}

\begin{abstract} We establish a combination theorem for parafree groups. These groups were introduced by Baumslag in the sixties. One of the current motivations for a better understanding of their structure is that they show up naturally in connection with Remeslennikov's conjecture on the profinite rigidity of free groups. In this article, we determine when the fundamental group  of a finite graph of groups with cyclic edge groups  is parafree.   
\end{abstract}
 \maketitle
 
 \section{Introduction}
 
A group   is said to be  \textit{parafree} if it  is residually nilpotent  and  its quotients by the terms of its lower central series are the same as those of a free group. These groups were introduced by Baumslag \cite{Ba67}, who also constructed first examples of non-free parafree groups. From now on, we will assume  that finite generation  part of the definition of parafree groups. 

Another motivation for studying parafree groups relates to the following central open problem in the theme of profinite rigidity, attributed to Remeslennikov \cite[Question 12]{Nos79}: given a finitely generated residually finite group $G$ with the same profinite completion as a free group $F$, does it follow that $G\cong F$? The first author \cite{Ja20} showed that such $G$ must be a parafree group.

Many known examples of parafree groups are isomorphic to amalgamated free products or HNN extensions of free groups  over cyclic edge groups \cite{Ba05}. 
The purpose of this paper is to characterize when these constructions yield to parafree groups.

Our first result describes the exact circumstances under which an amalgamated free product   with cyclic  amalgam is parafree. 

\begin{teo} \label{amalgamparafreeintr} Let $U$ and $V$ be   finitely generated groups,  $1\neq u\in U$ and $1\neq v\in V$. Consider the amalgamated free product  $W=U\underset{u=v}{*}V$.
Then $W$ is parafree if and only if the following three conditions hold.
\begin{enumerate}
\item The groups $U$ and $V$ are parafree.
    \item  The element $uv^{-1}$ is not a proper power in  the abelianization   of  $U*V$.
    \item At least one of $u$ or $v$ is not a proper power in $U$ or $V$, respectively.
\end{enumerate} 
\end{teo}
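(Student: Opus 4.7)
The plan is to treat necessity and sufficiency separately, with condition (2) controlling the abelianization, condition (3) controlling the amalgamated subgroup, and condition (1) feeding the parafreeness of the pieces into the whole.

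For necessity, assume $W$ is parafree. Condition (2) reads off from $W^{\ab} = (U^{\ab}\oplus V^{\ab})/\langle(\bar u,-\bar v)\rangle$: torsion-freeness of $W^{\ab}$, which is forced by parafreeness, is equivalent to $(\bar u,-\bar v)$ being either zero or primitive in the free abelian group $U^{\ab}\oplus V^{\ab}$, i.e., to $uv^{-1}$ not being a proper power in $(U*V)^{\ab}$. Condition (1) follows because $U, V$ embed in $W$ by Bass--Serre theory, so they are residually nilpotent, and a rank count using the Magnus embedding of $W$ together with (2) identifies their nilpotent quotients with free nilpotent ones. For condition (3), if both $u=u_0^m$ and $v=v_0^n$ with $m,n\geq 2$, I would exhibit the subgroup $\langle u_0\rangle \underset{u_0^m=v_0^n}{*}\langle v_0\rangle$ of $W$ (a torus-knot-type group) and argue that it cannot sit inside a parafree group: its nontrivial center $\langle u_0^m\rangle$ together with the torsion of the quotient $\Z/m * \Z/n$ produces an obstruction---either a $\gcd(m,n)$-torsion class in $H_2$ or a non-free element in the associated graded Lie algebra---that is incompatible with the nilpotent tower of $W$ being free nilpotent.

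For sufficiency, assume (1), (2), (3). Set $n=\rk W^{\ab}$; by (2) this equals $\rk U^{\ab}+\rk V^{\ab}$ or $\rk U^{\ab}+\rk V^{\ab}-1$ according as $uv^{-1}$ is trivial or primitive in $(U*V)^{\ab}$. I would prove $\widehat W_p$ is free pro-$p$ of rank $n$ for every prime $p$. Parafreeness of $U$ and $V$ yields $\widehat U_p$ and $\widehat V_p$ free pro-$p$, and the pro-$p$ amalgamated free product $\widehat U_p\underset{\overline{\langle u\rangle}}{*}\widehat V_p$ computes $\widehat W_p$. Condition (3) is used to show that the procyclic edge group $\overline{\langle u\rangle}$ is a free factor of at least one of $\widehat U_p$ or $\widehat V_p$, and condition (2) pins down the rank of the resulting pro-$p$ amalgam as $n$. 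The universal division ring of fractions of the free group $F_n$, together with the matching pro-$p$ data across all primes, would then upgrade these pro-$p$ isomorphisms to the integral statement $W/\gamma_k W\cong F_n/\gamma_k F_n$ for all $k$ and to residual nilpotence $\bigcap_k\gamma_k W=1$.

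The main technical obstacle I expect is the pro-$p$ amalgam step: proving that an amalgamated free pro-$p$ product of two free pro-$p$ groups over a procyclic subgroup that is a free factor of one side is itself free pro-$p$ of the expected rank. This is where the asymmetric hypothesis in (3)---``at least one'' rather than ``both''---is essentially used. The secondary difficulty is bridging from the pro-$p$ isomorphisms at all primes to the integral lower central quotients of $W$, and this is precisely where the universal division ring of fractions machinery developed earlier in the paper should enter.
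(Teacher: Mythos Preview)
Your sufficiency argument contains a genuine gap and misidentifies where the difficulty lies.

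First, the claim that condition (3) makes the procyclic edge group $\overline{\langle u\rangle}$ a free factor of $\widehat U_p$ or $\widehat V_p$ is false. ``Not a proper power'' is far weaker than ``primitive'': a commutator $[a,b]$ in a free group $F_2$ is not a proper power, yet $\langle [a,b]\rangle$ is certainly not a free factor, and its closure in $(F_2)_{\hat p}$ is not a free pro-$p$ factor either. So your intended reduction of the pro-$p$ amalgam to a free product collapses.

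Second, and more importantly, you have the hard and easy parts reversed. Showing that $\widehat W_p$ is a free pro-$p$ group is the \emph{easy} step: condition (2) says exactly that the relator $uv^{-1}$ is primitive in $(\widehat U_p\coprod\widehat V_p)^{\ab}$, hence primitive in the free pro-$p$ group $\widehat U_p\coprod\widehat V_p$, so the quotient is free pro-$p$ of the correct rank. No amalgam analysis and no use of condition (3) is needed here. The \emph{hard} step---the one you call ``secondary''---is proving that $W\to\widehat W_p$ is injective, i.e.\ that $W$ is residually-$p$. Knowing $\widehat W_p$ is free for all $p$ does not by itself give residual nilpotence; that is an independent hypothesis in the characterization of parafreeness.

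The paper attacks injectivity of $\phi:W\to\widehat W_p$ via Proposition~\ref{pro-$p$ embedding lemma}: one must show that the $\F_pG$-module $M=\F_pG\otimes_{\F_pW}\F_pI_W$ (with $G=\phi(W)$) is $\D_{\F_pG}$-torsion-free of the correct dimension. Using Proposition~\ref{straugm}, $M$ is identified with $(\F_pI_{H_1}^G\oplus\F_pI_{H_2}^G)/J$ where $J$ is the diagonal copy of $\F_pI_A^G$, $A=\langle\phi(u)\rangle$. This is where condition (3) actually enters: ``$u$ not a proper power in $U$'' means precisely that $A$ is a \emph{maximal abelian} subgroup of $H_1$, which is the hypothesis needed in Proposition~\ref{Atorfree} to conclude that $\F_pI_{H_1}^G/\F_pI_A^G$ is $\D_{\F_pG}$-torsion-free. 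Proposition~\ref{amaltor} then assembles the pieces. Your proposal gestures at the division ring machinery but does not locate this mechanism, and without it there is no route from ``$\widehat W_p$ free'' to ``$W$ residually nilpotent''.

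For necessity of (3), your torus-knot subgroup is the right object, but the obstruction is simpler than $H_2$ or the Lie algebra: the element $u_0^m=v_0^n$ has a non-abelian centralizer containing $u_0$ and $v_0$, contradicting the fact (Proposition~\ref{centralizer}) that centralizers in parafree groups are cyclic.
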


A particular case  of Theorem \ref{amalgamparafreeintr}, where $U$ and $V$ are free, follows from the work of Baumslag \cite{Ba68} and Azarov \cite{Az98}. The case of HNN extensions with cyclic base groups is more intricate, and we obtain the following (less explicit) result.

\begin{teo} \label{hnnparafreeintr} Let $U$ be a  finitely generated group, $u\in U\setminus \{1\}$ and $\alpha:\langle u\rangle \to U$ a monomorphism. Put $v=\alpha(u)$.  
Consider the   HNN extension  $W=U *_{\alpha}$.
Then $W$ is  parafree if and only if the following four  conditions hold.
\begin{enumerate}
\item The group $U$ is parafree.
    \item The element $uv^{-1}$ is not a proper power in the abelianization of $U$.
    \item At least one of $u$ or $v$ is not a proper power in $U$.
    \item The image of the element $u$ is non-trivial in some finite nilpotent quotient of $W$.
\end{enumerate}
 \end{teo}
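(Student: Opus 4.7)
The plan is to adapt the strategy used for Theorem~\ref{amalgamparafreeintr}, with condition (4) being the new ingredient that handles the Baumslag--Solitar type collapses peculiar to HNN extensions.

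For the necessity direction, assume $W$ is parafree. Since $U$ embeds into $W$ by Britton's lemma and $u\neq 1$, residual nilpotence of $W$ yields (4) at once: pick any finite nilpotent quotient of $W$ detecting $u$. Next, $W^{\ab}\cong U^{\ab}/\langle u-v\rangle \oplus \Z$, and since $W^{\ab}$ is free abelian this forces $U^{\ab}$ to be free abelian of some rank $d$ and $u-v$ to be primitive in $U^{\ab}$, giving (2). To upgrade this to full parafreeness of $U$ (condition (1)), I would pass to pro-$p$ completions: parafreeness of $W$ makes $\hat W_p$ a free pro-$p$ group, and a careful analysis of the pro-$p$ HNN splitting (using the paper's techniques) shows that $\hat U_p$ embeds as a closed free pro-$p$ subgroup of the correct rank, from which (1) follows via a Stallings argument. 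For (3), a similar argument in $\hat W_p$: if both $u$ and $v$ were proper powers in $U$, then the defining relation $t^{-1}ut=v$ would be incompatible with the free pro-$p$ structure of $\hat W_p$.

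For sufficiency, assume (1)--(4). Conditions (1) and (2) give $W^{\ab}\cong\Z^d$ with $d=\rk U^{\ab}$. Choose $\varphi:F_d\to W$ inducing an isomorphism on $H_1$. The Mayer--Vietoris sequence for the HNN extension, combined with $H_2(U,\Z)=0$ (from parafreeness of $U$) and injectivity of $\Z\to U^{\ab}$, $1\mapsto u-v$ (since $u-v$ is primitive, hence nonzero), gives $H_2(W,\Z)=0$; thus $\varphi$ is an isomorphism on $H_1$ and a surjection on $H_2$. Stallings' theorem then provides isomorphisms $F_d/\gamma_k F_d\xrightarrow{\sim} W/\gamma_k W$ for every $k\ge 1$.

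The remaining step --- showing that $W$ is residually nilpotent --- is the main obstacle. My approach would be to prove that the pro-$p$ completion $\hat W_p$ is free pro-$p$ of rank $d$, so that $\bigcap_k \gamma_k W=1$. Conditions (3) and (4) are precisely what is needed here: (3) guarantees that $\langle u\rangle$ embeds as a closed procyclic subgroup of $\hat U_p$ and that $\alpha$ extends to a faithful pro-$p$ HNN datum, while (4) rules out the $BS(1,k)$-type collapse in which $u$ would become trivial in every nilpotent quotient of $W$. Verifying freeness of $\hat W_p$ under exactly these hypotheses is where the universal division ring of fractions of $\F_p[[\hat W_p]]$ developed in the paper should provide the decisive homological input; this is the part I expect to be technically most involved.
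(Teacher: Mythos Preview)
Your sufficiency argument has two genuine gaps. First, you assert $H_2(U,\Z)=0$ ``from parafreeness of $U$'': this is precisely Baumslag's Parafree Conjecture, which is open, so the Stallings route is blocked as written. That detour is in any case unnecessary: once $\hat W_p$ is free pro-$p$ for every $p$ --- an easy consequence of (1) and (2), since $uv^{-1}$ is then primitive in $\hat U_p\coprod\Z_p$ --- Proposition~\ref{carparafree} already yields that the lower central quotients of $W$ agree with those of a free group, with no $H_2$ computation required.

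Second, and more fundamentally, you write ``prove that $\hat W_p$ is free pro-$p$ \ldots\ so that $\bigcap_k\gamma_kW=1$''. This implication is false: freeness of the pro-$p$ completion says nothing about residual nilpotence (any perfect group has trivial, hence free, pro-$p$ completion). The genuine task is to show that the canonical map $\phi:W\to\hat W_p$ is \emph{injective} for some prime $p$, and this is where the division-ring machinery actually enters --- not to establish freeness of $\hat W_p$ (that is the two-line Claim~\ref{hnn0}), but to feed Proposition~\ref{pro-$p$ embedding lemma}, whose key torsion-freeness hypothesis on $\F_pG\otimes_{\F_pW}\F_pI_W$ is supplied by Proposition~\ref{hnntor}. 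In that scheme condition~(3) is what makes $\langle\phi(u)\rangle$ a \emph{maximal abelian} subgroup of $\phi(U)$, the hypothesis Proposition~\ref{hnntor} needs; condition~(4) is what allows one to choose $p$ so that $\phi(u)\neq1$, which forces $\phi|_U$ to be injective (Claim~\ref{hnn1}) and hence $\ker\phi$ to be free by Bass--Serre theory --- another hypothesis of Proposition~\ref{pro-$p$ embedding lemma}. Your necessity sketch for (3) also misses the actual mechanism: the paper argues that if $u=w_1^{n_1}$ and $v=w_2^{n_2}$ with $n_i\ge2$, then $w_1$ and $t^{-1}w_2t$ generate a non-abelian subgroup centralizing $u$, contradicting Proposition~\ref{centralizer}.
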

In the case where  $U$ is a parafree group with abelianisation isomorphic to $\Z^2$, the last condition of the previous theorem can be replaced by a simpler one.

\begin{cor} \label{hnncor} Let $U$ be a finitely generated group with $\abr (U)=2$, $u \in U\setminus \{1\}$ and $\alpha:\langle u\rangle \to U$ a monomorphism.  Put  $v=\alpha(u)$. 
Consider the   HNN extension  $W=U*_{\alpha}$.
Then $W$ is  parafree if and only if the following four  conditions hold.
\begin{enumerate}
    \item The group $U$ is parafree.
    \item The element $uv^{-1}$ is not a proper power in the abelianization of $U$.
    \item At least one of $u$ or $v$ is not a proper power in $U$.
    \item The images of $u$ and $v$  in the abelianization of $U$ generate a subgroup isomorphic to $\Z^2$.
\end{enumerate}
 \end{cor}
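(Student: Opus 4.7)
\emph{Proof plan.} We will derive the corollary from Theorem~\ref{hnnparafreeintr} applied with $U=F_2$. Since free groups are parafree, condition (1) of the theorem is automatic, and its conditions (2) and (3) coincide with conditions (1) and (2) of the corollary. The corollary therefore reduces to showing that, under these hypotheses, condition (3) of the corollary is equivalent to condition (4) of Theorem~\ref{hnnparafreeintr}.

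For the forward implication we will compute $W^{\ab}=(F_2^{\ab}\oplus\Z\bar t)/\langle\bar u-\bar v\rangle$: if $\bar u$ and $\bar v$ generate a rank-$2$ subgroup of $F_2^{\ab}=\Z^2$, then an equation $\bar u=n(\bar u-\bar v)$ would force $n=0$ and $n=1$ simultaneously, so the image of $u$ in $W^{\ab}$, and hence in $W^{\ab}\otimes\F_p$ for some prime $p$, is nontrivial; this exhibits a finite nilpotent quotient of $W$ in which $u$ is detected.

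For the reverse implication we argue contrapositively: assuming $\bar u$ and $\bar v$ are $\Z$-linearly dependent, we will deduce $u\in\bigcap_n\gamma_n(W)$, which defeats condition (4) of Theorem~\ref{hnnparafreeintr}. Writing $\bar u=c\bar w$ and $\bar v=d\bar w$ with $\bar w$ a generator of their cyclic span, condition (1) of the corollary rules out $c=d$ (as $\overline{uv^{-1}}=0=2\cdot 0$) and forces $\bar w$ to be primitive in $F_2^{\ab}$ with $|c-d|=1$; after a change of basis in $F_2$ we may assume $c-d=1$ and $\bar w=\bar x$, so that $u=x^c r$ and $v=x^{c-1}s$ with $r,s\in[F_2,F_2]$. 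The HNN relation $tut^{-1}=v$ rewrites as $uv^{-1}=[u,t]$ in $W$, and since $\overline{uv^{-1}}=\bar x$ we can pick $z\in[F_2,F_2]$ with $uv^{-1}=xz$, yielding the key descent identity
\[x=[u,t]\,z^{-1}\qquad\text{in }W.\]
From $\bar x=0$ in $W^{\ab}$ we get $x\in\gamma_2(W)$; assuming inductively that $x\in\gamma_n(W)$, we have $[x,y]\in\gamma_{n+1}(W)$, and because $[F_2,F_2]$ is the normal closure of $[x,y]$ inside $F_2\le W$, it follows that $[F_2,F_2]\subseteq\gamma_{n+1}(W)$; consequently $u=x^c r\in\gamma_n(W)$, $[u,t]\in\gamma_{n+1}(W)$, $z\in\gamma_{n+1}(W)$, and finally $x=[u,t]\,z^{-1}\in\gamma_{n+1}(W)$. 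Iterating gives $x,u\in\bigcap_n\gamma_n(W)$ as required. The main technical challenge is isolating this descent identity and verifying the jump from $\gamma_n$ to $\gamma_{n+1}$ under the inductive hypothesis; once those are in hand, the rest is routine commutator bookkeeping.
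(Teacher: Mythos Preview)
Your proposal is correct. Both your argument and the paper's hinge on the identity $uv^{-1}=[u,t]$ in $W$, but the execution of the harder direction (necessity of condition~(3)) is genuinely different.

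The paper works in the pro-$p$ completion: assuming $W$ parafree and that $\bar u,\bar v$ span a cyclic subgroup, one notes (using condition~(1)) that $\bar u$ is a multiple of $\overline{uv^{-1}}$, so $u$ lies in the closed normal subgroup $N\trianglelefteq (F_2)_{\hat p}$ generated by $uv^{-1}$; passing to $W_{\hat p}$ and rewriting $uv^{-1}=[u,t]$, one finds $u$ in the closed normal closure of $[u,t]$, whence a one-line pro-$p$ argument gives $u=1$, a contradiction. Your route is more elementary: after a change of basis making $\overline{uv^{-1}}=\bar x$, you isolate the descent identity $x=[u,t]z^{-1}$ with $z\in[F_2,F_2]$ and exploit the fact that $[F_2,F_2]$ is the normal closure of $[x,y]$ in $F_2$ to push $x$ (and hence $u=x^{c}r$) down the lower central series of $W$ by explicit induction. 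This avoids any pro-$p$ machinery and actually proves the slightly sharper statement that condition~(1) together with $\neg(3)$ already forces $u\in\bigcap_n\gamma_n(W)$, without assuming $W$ parafree. The paper's version is shorter; yours is self-contained and makes the mechanism visible.

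For the forward implication $(3)\Rightarrow(4)$ the paper simply declares it ``clear''; your computation via $W^{\ab}$ fills this in correctly. Just note that your passage to $W^{\ab}\otimes\F_p$ implicitly uses condition~(1), which guarantees $W^{\ab}\cong\Z^2$ is torsion-free, so a nonzero element survives modulo some prime~$p$.
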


 It is tempting to try to formulate a general criterion for the fundamental group of a graph of groups with   cyclic edge groups to be parafree.  Combining Theorems \ref{amalgamparafreeintr} and  \ref{hnnparafreeintr}, we derive  the following result. % (although the conditions are not so easy to check). 
 \begin{cor} \label{fundamentalparafreeintr} Let $(\mathcal G, \Gamma)$ be a  graph of   groups over a finite 
graph $\Gamma$ and  $W=\pi_1(\mathcal G, \Gamma)$ be its  fundamental group. Assume that all  vertex subgroups  $\mathcal G(v)$ ($v\in V(\Gamma)$)   are finitely generated and all   edge subgroups $\mathcal G(e)$ ($e\in E(\Gamma)$)  are cyclic. Then $W$ is parafree if and only if the following four conditions hold.
 
 \begin{enumerate}
 \item All the vertex  subgroups $\mathcal G(v)$ ($v\in V(\Gamma)$)    are parafree.
 
 \item The abelianization of $W$ is torsion-free   of rank 
 $$\abr(W)=\sum_{v\in V(\Gamma)}\abr(\mathcal G(v))-\sum_{e\in E(\Gamma)} \abr(\mathcal G(e)) +1-\chi(\Gamma),$$   where $\chi(\Gamma)=|V(\Gamma)|-|E(\Gamma)|$.

 \item All the centralizers of non-trivial elements in $W$ are cyclic.

 \item For  each non-trivial edge subgroup of  $\mathcal G(e)$  ($e\in E(\Gamma)$) there is a finite nilpotent quotient of $W$ where the image of this edge subgroup is non-trivial.
 \end{enumerate}

 \end{cor}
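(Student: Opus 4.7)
The plan is to induct on $|E(\Gamma)|$, invoking Theorems \ref{amalgamparafreeintr} and \ref{hnnparafreeintr} at each step. The base case $|E(\Gamma)|=0$ reduces directly to parafreeness of the unique vertex group. In the inductive step, pick an edge $e\in E(\Gamma)$: if $e$ is a bridge, then $W=\pi(\mathcal G,\Gamma_1)*_{\mathcal G(e)}\pi(\mathcal G,\Gamma_2)$ is an amalgamated free product over a cyclic subgroup, and Theorem \ref{amalgamparafreeintr} applies; otherwise $W=\pi(\mathcal G,\Gamma\setminus e)*_{\alpha}$ is an HNN extension and Theorem \ref{hnnparafreeintr} applies. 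Let $\pi'$ denote the smaller graph fundamental group appearing in either decomposition.

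For the forward direction, assume $W$ is parafree. Condition (3) follows from the known fact that non-trivial elements in a parafree group have cyclic centralizers. Condition (4) follows because a parafree group $G$ is residually torsion-free nilpotent (since $G/\gamma_k(G)$ agrees with the corresponding free group quotient, which is finitely generated torsion-free nilpotent and hence residually finite nilpotent), so any non-trivial element has non-trivial image in some finite nilpotent quotient. Condition (2) follows from the facts that a parafree abelianization is free abelian and that the standard presentation of $W^{ab}$ for a graph of groups with cyclic edge subgroups yields exactly the displayed rank. For condition (1), the inductive hypothesis combined with the ``only if'' direction of Theorem \ref{amalgamparafreeintr} or \ref{hnnparafreeintr}---which forces $\pi'$ and each factor to be parafree---gives that every vertex group is parafree.

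For the backward direction, we translate (1)--(4) into the hypotheses of the appropriate theorem plus (1)--(4) for the smaller graph $\Gamma\setminus e$ in order to invoke induction on $\pi'$. The key translations are: condition (2) of the corollary is equivalent, via tracking abelianizations, to the ``not a proper power in the abelianization of the free product'' hypothesis of the theorems together with (2) for the smaller graph; condition (3) of the corollary implies the ``at least one of $u,v$ is not a proper power'' hypothesis, because if both $c_e$ and $c_e'$ were proper powers in their vertex groups, their roots would generate a non-cyclic centralizer of the common image element in $W$; and condition (4) of the corollary specializes directly to condition (4) of Theorem \ref{hnnparafreeintr}. Condition (1) for $\pi'$ and for the factors is just condition (1) of the corollary restricted to the relevant vertex groups.

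The main obstacle is showing that (1)--(4) for $W$ imply the same conditions for $\Gamma\setminus e$ so that the induction closes. Cyclic centralizers pass to subgroups, and the finite-nilpotent-image condition transfers via the embedding $\pi'\hookrightarrow W$ provided by Britton's lemma or the amalgam normal form. The torsion-freeness and prescribed rank of $(\pi')^{ab}$ require more care: they have to be deduced from the corresponding property of $W^{ab}$ using the explicit description of $W^{ab}$ as a quotient of $(\pi')^{ab}$ (respectively, of $(\pi_1)^{ab}\oplus(\pi_2)^{ab}$) by a single abelianized edge relation, which is where the primitivity hypothesis of Theorems \ref{amalgamparafreeintr}--\ref{hnnparafreeintr} gets used to align the rank drop correctly at each stage.
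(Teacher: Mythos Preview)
Your approach is essentially the same as the paper's: induction on $|E(\Gamma)|$, splitting into the amalgam and HNN cases via an edge deletion, and invoking Theorems~\ref{amalgamparafreeintr} and~\ref{hnnparafreeintr} (together with the remarks that their conditions (2) and (3) may be replaced by the rank condition~(2') and the cyclic-centralizer condition~(3')). The backward direction is exactly as in the paper, including the key point that the chain of rank inequalities $\abr(W)\ge\abr(U)+\abr(V)-\abr(\mathcal G(e))\ge\cdots$ must collapse to equalities, forcing condition~(2) for the smaller graphs.

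There is one small gap in your forward direction. You write that condition~(2) ``follows from the facts that a parafree abelianization is free abelian and that the standard presentation of $W^{\mathrm{ab}}$ for a graph of groups with cyclic edge subgroups yields exactly the displayed rank.'' The second fact is not automatic: the Mayer--Vietoris presentation always gives $\abr(W)\ge\sum_v\abr(\mathcal G(v))-\sum_e\abr(\mathcal G(e))-\chi(\Gamma)$, but equality can fail if some edge relation is redundant in the abelianization. Parafreeness of $W$ alone does not obviously force equality without further argument. The paper handles this by folding condition~(2) into the same induction as condition~(1): once you know $U$ and $V$ (or $U$ in the HNN case) are parafree, Theorem~\ref{amalgamparafreeintr} (resp.~\ref{hnnparafreeintr}) gives $\abr(W)=\abr(U)+\abr(V)-1$ (resp.\ $\abr(W)=\abr(U)$), and then the inductive hypothesis on the smaller graphs yields the full formula. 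You should do the same rather than asserting the rank formula directly.
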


It is relatively easy to see that the conditions presented in Theorem \ref{amalgamparafreeintr} and Theorem \ref{hnnparafreeintr} are necessary for $W$ to be  parafree and they  also imply that the quotients of $W$ by the terms of its lower central series are the same as those of a free group. The difficult part in the proofs of both theorems is to show that $W$ is residually nilpotent.
We establish this  as an application of recent methods developed in \cite{Ja20} which were used for constructing  abstract subgroups of  free pro-$p$ groups.

The paper is organized as follows. In Section \ref{prelim} we describe the preliminary results which we use in the paper. Section \ref{torfree} is devoted to construction of new examples of $\D_{\p G}$-torsion-free $\p G$-modules. In Section \ref{finalsect} we prove Theorem \ref{amalgamparafreeintr} and Theorem \ref{hnnparafreeintr} and their corollaries.

\section*{Acknowledgments}
This paper is partially supported by the  Spanish Ministry of Science and Innovation through the grant   MTM2017-82690-P and the ``Severo Ochoa Programme for Centres of Excellence in R\&D"  (CEX2019-000904-S4).  The work of the second author is supported  by the collaboration grant JAE Intro SOMdM 2020, associated to ICMAT. The authors would like to thank an anonymous referee for their comments and Martin Bridson for pointing out the paper \cite{Az98}.

\section{Preliminaries}\label{prelim}

\subsection{General notation for  rings, groups and pro-$p$ groups}
All of our rings $R$ will be associative and unitary. All ring homomorphisms will map $1\mapsto 1$. All $R$-modules are left $R$-modules, unless we say that we consider right $R$-modules.

Given a group $G$, we will denote by $G'=[G, G]$ its commutator subgroup
 and by $G_{\ab}=G/G'$ its abelianization. The terms of the lower central series of $G$ are defined as $\gamma_1(G)=G$ and $\gamma_{n+1}(G)=[\gamma_n(G), G]$ if $n\geq 1$ and the terms of the $p$-lower central series of $G$ are defined as $G_{1,p}=G$ and $G_{n+1,p}=[ G_{n,p}, G]G_{n,p}^p$ if $n\geq 1$. We recall that when $\GG$ is a finitely generated pro-$p$ group, the groups $\ga_k \GG$ are closed normal subgroups of $\GG$.  
 
 We denote by $d(G)$ the minimal number of generators of a group $G$ and we put  $\abr(G)=d(G_{\ab})$. Similarly, given a pro-$p$ group $\GG$, we denote by $d(\GG)$ the  minimal number of topological generators of $\GG$, which also equals the dimension of its $p$-abelianisation $\GG/[\GG, \GG]\GG^p$ as an $\F_p$-vector space.
%The \textit{completed group algebra  of $\GG$ over $\F_p$}  is the inverse limit of the $\F_p$-group algebras $\F_p[ \GG/\UU]$
%\[\p [[\GG]]= \varprojlim_{\UU\no \GG} \p[\GG/\UU], \] 
% where $\UU$ ranges over open normal subgroup of $\GG$. We now briefly describe an instance where this construction is very relevant in our arguments. A parafree group $G$ has free pro-$p$ completion $G_{\hat{p}}$ (see Proposition \ref{carparafree}). When dealing with $\F_p[G]$-modules, it is useful to have enough understanding of which elements are invertible in $\F_p[G]$. In this flavour, the completed group algebra of $G_{\hp}$ will provide a bigger environment in which many more elements of $\F_p[G]$ become invertible (in fact, every $x\in \F_p[G]$ that does not belong to the kernel of the augmentation map $\F_p[G]\lrar \F_p$ becomes invertible in the ring $\F_p[[G_{\hp}]]$).
 
As usual, we  denote the free product in the category of pro-$p$ groups by $\coprod$.

\subsection{Parafree groups}

If $G$ is a group we denote by $G_{\hat{p}}$ the pro-$p$ completion of $G$. In this paper we will use the following characterization of   parafree groups.

\begin{pro} \label{carparafree} Let $G$ be finitely generated residually nilpotent group. Then $G$ is parafree if and only if $G_{\hat{p}}$ is a free pro-$p$ group for every prime $p$. Moreover, a parafree group  is residually-$p$ for every prime $p$.
\end{pro}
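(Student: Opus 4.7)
The plan is to prove both directions by relating $G$ to a free group $F_r$ of rank $r$ via an explicit map $\varphi\colon F_r\to G$ lifting a basis of the abelianization.

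For the forward direction, suppose $G$ is parafree of rank $r$, so $G_{\ab}\cong\Z^r$. Lift a basis to $g_1,\dots,g_r\in G$ and let $\varphi\colon F_r\to G$ send the free generators to the $g_i$. The induced map $\hat\varphi\colon (F_r)_{\hat p}\to G_{\hat p}$ is surjective because the $g_i$ span $G_{\ab}/p$ (Burnside basis theorem). Moreover, for every $n$, the quotient $G/G_{n,p}$ depends only on the abstract nilpotent group $G/\gamma_n(G)$, so the parafree hypothesis yields $|G/G_{n,p}|=|F_r/(F_r)_{n,p}|$. Hence $\hat\varphi$ induces a surjection between finite groups of equal order on each $p$-lower central quotient, which is automatically an isomorphism. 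It follows that $\ker\hat\varphi\subseteq\bigcap_n ((F_r)_{\hat p})_{n,p}=1$, and $G_{\hat p}\cong(F_r)_{\hat p}$ is free pro-$p$ of rank $r$.

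For the converse, assume $G_{\hat p}$ is free pro-$p$ for every $p$. The closed abelianization of $G_{\hat p}$ is $\Z_p\otimes_\Z G_{\ab}$, which must be a free $\Z_p$-module, so the finitely generated abelian group $G_{\ab}$ has no $p$-torsion; letting $p$ vary, $G_{\ab}\cong\Z^r$ with $r=d(G_{\hat p})$ independent of $p$. Choose $g_1,\dots,g_r\in G$ lifting a basis and form $\varphi\colon F_r\to G$ as before. A commutator induction based on $[H\gamma_n(G),H\gamma_n(G)]\subseteq H\gamma_{n+1}(G)$ shows that the image $H$ of $\varphi$ satisfies $H\gamma_n(G)=G$ for all $n$, so $\varphi$ induces a surjection $\varphi_n\colon F_r/\gamma_n(F_r)\twoheadrightarrow G/\gamma_n(G)$. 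The pro-$p$ version $\hat\varphi\colon (F_r)_{\hat p}\to G_{\hat p}$ is a surjection between free pro-$p$ groups of equal rank, hence an isomorphism since finitely generated pro-$p$ groups are Hopfian.

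To show $\varphi_n$ is injective, assume $\varphi(w)\in\gamma_n(G)$. Then $\hat\varphi(w)\in\overline{\gamma_n(G_{\hat p})}$, and via $\hat\varphi$ this places $w\in\overline{\gamma_n((F_r)_{\hat p})}$. Since $F_r/\gamma_n(F_r)$ is finitely generated torsion-free nilpotent, it is residually-$p$, so it embeds in $(F_r)_{\hat p}/\overline{\gamma_n((F_r)_{\hat p})}$; hence $w\in\gamma_n(F_r)$. This gives $G/\gamma_n(G)\cong F_r/\gamma_n(F_r)$ for every $n$, proving $G$ is parafree. The moreover clause follows because each $G/\gamma_n(G)$ is then finitely generated torsion-free nilpotent, hence residually-$p$, so $\bigcap_m G_{m,p}\subseteq\gamma_n(G)$ for all $n$; residual nilpotence then forces $\bigcap_m G_{m,p}=1$. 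The main obstacle is this last injectivity step, where the residual-$p$ property of $F_r/\gamma_n(F_r)$ is what lets pro-$p$ information detect abstract membership in $\gamma_n(F_r)$.
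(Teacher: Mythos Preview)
Your argument is correct, but it takes a genuinely different route from the paper. The paper's proof is a two-line reduction to the literature: it introduces the notion of \emph{weakly $p$-parafree} (same $p$-lower central quotients as a free group), observes that a finitely generated residually nilpotent group is parafree if and only if it is weakly $p$-parafree for every $p$, and then cites \cite[Corollary~2.9]{Lac10} for the equivalence ``weakly $p$-parafree $\Leftrightarrow$ $G_{\hat p}$ is free pro-$p$'' and \cite[Theorem~2.1]{Gru57} for the residual-$p$ property of finitely generated torsion-free nilpotent groups. You instead construct an explicit comparison map $\varphi\colon F_r\to G$ and analyse it level by level, effectively unpacking the content of the Lackenby citation (and, implicitly, the unproved observation in the paper that same lower central quotients is equivalent to same $p$-lower central quotients for all $p$). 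Both approaches ultimately rest on the same Gruenberg fact---that $F_r/\gamma_n(F_r)$ is residually-$p$---which is exactly where your injectivity step and the paper's ``moreover'' clause draw their force. Your version is more self-contained and makes the mechanism transparent; the paper's version is shorter but opaque without the cited references.
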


\begin{proof}A group is said to be weakly $p$-parafree if its quotients by the terms of its $p$-lower central series are the same as those of a free group. In these terms, a finitely generated $G$ will be parafree if and only if it is residually nilpotent and it is weakly $p$-parafree for every prime $p$. We know from \cite[Corollary 2.9]{Lac10} that a finitely generated $G$ is  weakly $p$-parafree if and only if it has free pro-$p$ completion. This proves the first claim. The second claim of the proposition follows from the fact that finitely generated torsion-free nilpotent groups are residually-$p$ for every prime $p$ \cite[Theorem 2.1]{Gru57}. 
\end{proof}
The following result provides a useful necessary condition for a group to be parafree.
\begin{pro}  \label{centralizer} 
The centralizers of non-trivial elements of a parafree group are cyclic.
 \end{pro}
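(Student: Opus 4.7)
The plan is to construct an injective group homomorphism $\pi:C_G(g)\hookrightarrow\Z$, which immediately forces $C_G(g)$ to be cyclic. The trivial case $\abr(G)\le 1$ reduces (using residual nilpotence) to $G\cong\Z$, so I assume $n_0:=\abr(G)\ge 2$. Because $G$ is parafree, the isomorphisms $G/\gamma_k(G)\cong F_{n_0}/\gamma_k(F_{n_0})$ assemble into an isomorphism of graded Lie rings $\gr(G)\cong L_\Z$, where $L_\Z$ denotes the free Lie ring on $n_0$ generators (Magnus's classical identification of the associated graded of a free group). By residual nilpotence $g$ has a well-defined depth $d\ge 1$, namely $g\in\gamma_d(G)\setminus\gamma_{d+1}(G)$; let $\bar g\in L_d$ be its nonzero image, and factor $\bar g=c\bar g'$ with $c\ge 1$ maximal and $\bar g'\in L_d$ primitive (possible because each $L_d$ is free abelian).

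The decisive external input is the classical theorem (Shirshov) that in a free Lie algebra on at least two generators over a field, the centralizer of any nonzero element is one-dimensional. Applied to $L_\Q:=L_\Z\otimes\Q$ it gives $C_{L_\Q}(\bar g)=\Q\bar g'$, a one-dimensional subspace concentrated in degree $d$; a short primitivity argument in the free abelian group $L_\Z$ then upgrades this to $C_{L_\Z}(\bar g)=\Z\bar g'$.

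For any $h\in C_G(g)$, the identity $[g,h]=1$ passes to $[\bar g,\bar h]=0$ in $\gr(G)$, so $\bar h\in\Z\bar g'$. Since $\Z\bar g'$ is concentrated in degree $d$, every nontrivial $h\in C_G(g)$ has depth exactly $d$, and the prescription $\bar h=\pi(h)\bar g'$ (with $\pi(1)=0$) defines a map $\pi:C_G(g)\to\Z$. Additivity of $\pi$ reflects the commutativity of $\gamma_d(G)/\gamma_{d+1}(G)$; for injectivity, $\pi(h)=0$ with $h\ne 1$ would force $h\in\gamma_{d+1}(G)$, and iterating the centralizer constraint in each higher graded piece (where $C_{L_\Z}(\bar g)$ vanishes) together with residual nilpotence $\bigcap_k\gamma_k(G)=\{1\}$ drives $h$ to $1$. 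Hence $C_G(g)$ embeds into $\Z$ and is cyclic. The only nontrivial ingredient is Shirshov's centralizer theorem, which I would invoke rather than reprove; the remainder is routine bookkeeping with the lower central filtration.
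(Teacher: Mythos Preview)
Your argument is correct and takes a genuinely different route from the paper. The paper simply cites Baumslag \cite[Theorem 4.2]{Bau69}, which gives that in a parafree group (a) every two-generated subgroup is free and (b) every abelian subgroup is cyclic; from (a) one sees that any $a\in C_G(g)$ lies in a cyclic group together with $g$, so $C_G(g)$ is locally cyclic, hence abelian, hence cyclic by (b). Your approach instead works directly with the associated graded Lie ring $\gr(G)$ and invokes the Shirshov--Witt theorem (two commuting elements in a free Lie algebra over a field are linearly dependent) to pin down $C_{L_\Z}(\bar g)=\Z\bar g'$, from which the embedding $C_G(g)\hookrightarrow\Z$ follows. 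This is more explicit and closer to first principles; indeed, Baumslag's results in \cite{Bau69} are themselves proved via the lower central series and Lie ring methods, so your argument is essentially unpacking the relevant part of his theorem rather than quoting it.

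Two small remarks. First, your sentence ``the isomorphisms $G/\gamma_k(G)\cong F_{n_0}/\gamma_k(F_{n_0})$ assemble into an isomorphism of graded Lie rings'' is slightly glib: the individual isomorphisms need not be compatible, but the conclusion $\gr(G)\cong L_\Z$ is still correct because $\gr(G)$ is generated in degree $1$, giving a surjection $L_\Z\twoheadrightarrow\gr(G)$, and the graded pieces have equal (finite) rank since each $\gamma_k(G)/\gamma_{k+1}(G)\cong\gamma_k(F)/\gamma_{k+1}(F)$. Second, your injectivity paragraph is more elaborate than necessary: once you have shown that every nontrivial $h\in C_G(g)$ has depth exactly $d$, the kernel of $\pi$ is $C_G(g)\cap\gamma_{d+1}(G)=\{1\}$ immediately.
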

 \begin{proof}  Let $A$ be  the centralizer of a non-trivial element $u$ of a parafree group $G$. By
\cite[Theorem 4.2]{Bau69}, we know that
\begin{enumerate}
\item[(a)] a  two-generated subgroup of $G$ is free and 
\item[(b)] an abelian subgroup of $G$ is cyclic. 
\end{enumerate} Thus, by (a), $A$ is locally cyclic,   and, by (b), $A$ is cyclic. \end{proof}

\subsection{Group algebra and the augmentation ideal}

Let $G$ be a group and $k$ a commutative ring.  The group ring of $G$ over $k$ is denoted by $kG$. We denote by $I_G$  the  augmentation ideal of  $\Z G$ and by $kI_G=k\tens{\Z} I_G$ the augmentation ideal of $kG$.  

%Given a set $S$, we denote by $kG^{(S)}$ the free $kG$ module with a basis $\{e_s\}_{s\in S}$ indexed by $S$.

Given two groups $H\leq G$, we denote by $kI_H^{G}$ the left ideal of $kG$ generated by $kI_{H}$. Since $kG$ is a natural free right $kH$-module, it follows that the canonical map of $kG$-modules 
\[kG\tens{kH} k I_{H}\lrar kI_{H}^{G}\]
is an isomorphism.
\subsection{The fundamental groups of graphs of groups}

We refer the reader to \cite{DD89} for standard definitions and notions related to graph of groups and their fundamental groups. 

By a \textit{graph of groups}  $(\mathcal G, \Gamma)$ we mean a connected graph $\Gamma =(V(\Gamma), E(\Gamma), \iota, \tau)$ together with a function $\mathcal G$ which assigns to each $v\in V(\Gamma)$ a group $\mathcal G(v)$, and to each $e\in E(\Gamma)$ a distinguished subgroup $\mathcal G(e)$ of $\mathcal G(\iota(e))$ and injective group homomorphism $t_e:\mathcal G(e)\to \mathcal G(\tau(e))$. The monomorphisms $t_e$ are called the \textit{edge functions}.

In this paper we work mostly with two particular cases:  amalgamated free products and HNN extensions. The structure of the augmentation ideal of the group algebra of the  fundamental group of a graph of groups is described in \cite[Lemma 6]{Ch76}. We will describe this result in  the cases that interest us.
\begin{pro} \label{straugm} Let $k$ be a commutative ring. 
\begin{enumerate} 
\item Let $U$ and $V$ be two groups, $A$ a subgroup of $U$ and $\alpha: A\to V$ a monomorphism. Put $W=U*_A V$. Then there exists an exact of sequence of $kW$-modules.
$$0\to kI_A^W\xrightarrow{\gamma} kI_U^W\oplus kI_V^W\xrightarrow{p} kI_W\to 0,$$
where $\gamma(a)=(a,-a)$ if $a\in kI_A$ and $p(b,c)=b+c$ if $b\in kI_U^W$ and $c\in kI_V^W$.

\item  Let $U$  be a group, $A$ a subgroup of $U$ and $\alpha: A\to U$ a monomorphism. Put $W=U*_\alpha=\langle U, t: tg=\alpha(g) t \textrm{\ if  }g\in A\rangle $.
Then there exists an exact of sequence of $kW$-modules.
$$0\to kI_{\alpha(A)}^W\xrightarrow{\gamma} kI_U^W\oplus kW \xrightarrow{p}k I_W\to 0,$$
where $\gamma(a)=( a(1-t),a)$ if $a\in k I_{\alpha(A)}^W$ and  $p(b,c)=b+c(t-1)$ if $b\in kI_U^W$ and $c\in kW$.
 
\end{enumerate}
 
\end{pro}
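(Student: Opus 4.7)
The plan is to deduce both exact sequences from Bass-Serre theory by combining the augmented cellular chain complex of the Bass-Serre tree with the standard short exact sequence
$$0\to kI_H^W\to kW\to k[W/H]\to 0,$$
valid for every subgroup $H\le W$, and assembling the data into a commutative $3\times 3$ diagram to which the $3\times 3$ lemma applies.

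For part (1), the Bass-Serre tree of $W=U*_AV$ has vertex set $W/U\sqcup W/V$ and edge set $W/A$, so contractibility gives the exact sequence
$$0\to k[W/A]\xrightarrow{d} k[W/U]\oplus k[W/V]\xrightarrow{\epsilon} k\to 0$$
with $d(gA)=(gU,-gV)$ and $\epsilon$ the sum of the two augmentations. I would take this as the bottom row of a $3\times 3$ diagram whose columns are the standard sequences for $A$, for $U\oplus V$ (i.e.\ the direct sum of the standard sequences for $U$ and for $V$), and for $W$; and whose middle row is the split exact sequence
$$0\to kW\xrightarrow{x\mapsto(x,-x)} kW\oplus kW\xrightarrow{(x,y)\mapsto x+y} kW\to 0.$$
Commutativity of the four squares is immediate from the formulas, so the top row of the diagram is precisely the sequence of the statement, and the $3\times 3$ lemma gives its exactness.

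For part (2), the Bass-Serre tree of $W=U*_\alpha$ has vertex set $W/U$, with the standard edge connecting $U$ to $tU$ having stabilizer $U\cap tUt^{-1}=\alpha(A)$. This yields
$$0\to k[W/\alpha(A)]\xrightarrow{d} k[W/U]\xrightarrow{\epsilon} k\to 0,$$
with $d(g\alpha(A))=gU-gtU$ (up to sign). I would build the analogous $3\times 3$ diagram whose middle row is
$$0\to kW\xrightarrow{x\mapsto(x(1-t),x)} kW\oplus kW\xrightarrow{(x,y)\mapsto x+y(t-1)} kW\to 0,$$
whose exactness is an easy direct check (injectivity and composition zero are clear; surjectivity uses $z\mapsto(z,0)$; and $x+y(t-1)=0$ forces $x=y(1-t)$, showing $(x,y)$ is the image of $y$). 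The second column is $0\to kI_U^W\oplus kW\to kW\oplus kW\to k[W/U]\to 0$ (a direct sum of the standard sequence for $U$ with the trivial sequence $0\to kW\to kW\to 0$), also exact. Applying the $3\times 3$ lemma gives the top row exactly as stated.

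The main obstacle is only bookkeeping: identifying the edge stabilizer correctly in the HNN case (as $\alpha(A)$, not $A$), and fixing consistent sign conventions for the boundary of the Bass-Serre tree so that the maps $\gamma$ and $p$ of the statement come out verbatim. Once the two diagrams are written down, verifying commutativity reduces to inspecting the formulas, and the rest is the $3\times 3$ lemma.
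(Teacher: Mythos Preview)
Your argument is correct. The paper does not actually prove this proposition; it merely cites \cite[Lemma~6]{Ch76} (Chiswell). Your approach---combining the augmented cellular chain complex of the Bass--Serre tree with the standard sequences $0\to kI_H^W\to kW\to k[W/H]\to 0$ via a $3\times 3$ diagram---is essentially Chiswell's method, so you have reconstructed the cited argument. The only delicate points are precisely the ones you flag: in part~(2) the edge stabilizer must be identified as $\alpha(A)$ rather than $A$, and the orientation of the Bass--Serre tree must be chosen so that the boundary map $d(g\alpha(A))=gU-gtU$ matches the middle-row map $x\mapsto(x(1-t),x)$; your checks of commutativity and of exactness of the middle row are accurate.
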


The following proposition will be used several times in the paper.

\begin{pro} \cite[Corollary 1.14 and Proposition 1.20]{Ba93}  \label{subgraph} Let $(\mathcal H, \Delta)$ be a subgraph of groups of a graph of groups $(\mathcal G, \Gamma)$, i.e. 
\begin{enumerate}
\item $\Delta$ is  a connected subgraph of a finite connected graph  $\Gamma$;
\item $\mathcal H(v)\le \mathcal G(v)$ if $v\in V(\Delta)$, $\mathcal H(e)\le \mathcal G(e)$ if $e\in E(\Delta)$ and the edge functions of $(\mathcal H, \Delta)$ are the restrictions of the edge functions of $(\mathcal G, \Gamma)$.
\end{enumerate}
Assume that $$\mathcal H(e)=\mathcal G(e)\cap \mathcal H(\iota(e)) \textrm{\  and\ } t_e(\mathcal H(e))=t_e(\mathcal G(e))\cap \mathcal H(\tau(e))  \textrm{\ if\ }e\in E(\Delta).$$ 
 Let $\Delta_0$ be a maximal subtree in $\Delta$ and $\Delta_0\subset \Gamma_0$ a maximal subtree in $\Gamma$. Then the canonical map between the fundamental groups  of graphs of groups 
$$\pi_1(\mathcal H,\Delta, \Delta_0)\to \pi_1(\mathcal G,\Gamma,\Gamma_0)$$ is injective.

\end{pro}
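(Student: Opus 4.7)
The plan is to use Bass--Serre theory. Let $T_{\mathcal G}$ denote the Bass--Serre tree associated to $(\mathcal G, \Gamma)$ relative to the maximal subtree $\Gamma_0$, on which $G := \pi(\mathcal G, \Gamma, \Gamma_0)$ acts with quotient graph $\Gamma$ and vertex/edge stabilizers conjugate to $\mathcal G(v)$ and $\mathcal G(e)$; define $T_{\mathcal H}$ analogously for $H := \pi(\mathcal H, \Delta, \Delta_0)$. The canonical homomorphism $\varphi : H \to G$ is induced by the inclusions $\mathcal H(v) \hookrightarrow \mathcal G(v)$ and $\mathcal H(e) \hookrightarrow \mathcal G(e)$, together with the fact that $\Delta_0 \subset \Gamma_0$ so preferred lifts are compatible.

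First I would construct a $\varphi$-equivariant simplicial map $f : T_{\mathcal H} \to T_{\mathcal G}$. Vertices of $T_{\mathcal H}$ are cosets $h\,\mathcal H(v)$ with $h \in H$ and $v \in V(\Delta)$, with an analogous description for edges, and $f$ would send $h\,\mathcal H(v)$ to $\varphi(h)\,\mathcal G(v)$ (and similarly on edges). Well-definedness uses only the containments $\mathcal H(v) \le \mathcal G(v)$, $\mathcal H(e) \le \mathcal G(e)$, and the compatibility of edge functions.

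Second, I would show that $f$ is injective; here the two coset hypotheses become essential. The condition $\mathcal H(e) = \mathcal G(e) \cap \mathcal H(\iota(e))$, together with its twin at $\tau(e)$, guarantees that distinct $\mathcal H(e)$-cosets inside $\mathcal H(\iota(e))$ remain distinct as $\mathcal G(e)$-cosets inside $\mathcal G(\iota(e))$. This translates directly into $f$ being injective on the star of every vertex. Since $T_{\mathcal H}$ and $T_{\mathcal G}$ are trees and $f$ is a locally injective simplicial morphism of connected graphs, a standard folding/path-lifting argument (the image of any non-backtracking path is non-backtracking, hence embedded in the tree $T_{\mathcal G}$) promotes local injectivity to global injectivity.

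Finally, if $h \in \ker \varphi$, then $h$ acts on $T_{\mathcal H}$ while fixing the image of $f$ pointwise; injectivity of $f$ then forces $h$ to fix $T_{\mathcal H}$ pointwise, and the only such element of $H$ is the identity. The main obstacle is the second step: one must set up the coset description of the two Bass--Serre trees carefully and verify that the two intersection hypotheses yield precisely the local injectivity on stars. An alternative avoiding tree actions would be a normal-form (Britton-style) argument: the same intersection conditions guarantee that any word in $H$ that is reduced in the Bass--Serre normal-form sense remains reduced when read in $G$, so no non-trivial element of $H$ can collapse to the identity in $G$.
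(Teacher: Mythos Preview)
The paper does not supply its own proof of this proposition; it is quoted from Bass \cite[Corollary~1.14 and Proposition~1.20]{Ba93} as a black box. Your outline via Bass--Serre trees is the standard route and is in spirit what Bass does, so there is no meaningful divergence to report.

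Your argument is sound in outline, but the final sentence hides a small gap. You conclude by asserting that ``the only such element of $H$ is the identity'', i.e.\ that the action of $H$ on $T_{\mathcal H}$ is faithful. In general this is false: the kernel of the action of a fundamental group on its Bass--Serre tree is the intersection of all conjugates of the vertex groups, and this can be nontrivial (think of an HNN extension with identity edge maps). The correct way to finish uses one more piece of the hypothesis: once you know $h\in\ker\varphi$ fixes $T_{\mathcal H}$ pointwise, it fixes in particular the base vertex corresponding to some $v\in V(\Delta)$, so $h\in\mathcal H(v)$; but $\varphi|_{\mathcal H(v)}$ is the composite of the given inclusions $\mathcal H(v)\hookrightarrow\mathcal G(v)\hookrightarrow G$, hence injective, and $\varphi(h)=1$ then forces $h=1$. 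With that one-line correction (and a careful check that the coset hypotheses yield local injectivity at \emph{both} ends of each edge, which you correctly flag as the main technical point), your proof goes through.
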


\subsection{The induced map between the augmentation ideals} Given a group homomorphism $\tilde G\to G$, we obtain an induced map between the augmentation ideals
 $I_{\tilde G}\to I_G$.
Here we  will explain how one can derive information from   this induced map back to the initial group homomorphism. 

\begin{pro} \label{kernelmodp} Let $\phi : \ti{G}\rar G$ be a surjective group homomorphism of kernel $K$. Suppose that the natural  homomorphism of $k G$-modules 
\[\alpha: k G  \tens{k \ti{G}} kI_{\ti{G}}    \lrar k I_G,\]
defined by the $k$-linear extension of $a \otimes b\mapsto a\phi(b)$, is an isomorphism. Then 
\[k\otimes_{\Z} K_{\ab}=0.\]
\end{pro}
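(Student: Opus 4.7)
The plan is to realize $k\otimes_{\Z} K_{\ab}$ as $\Tor_1^{k\tilde G}(kG,k)$ and then to read off its vanishing from the injectivity of $\alpha$.

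First I would apply $kG\otimes_{k\tilde G}-$ to the standard short exact sequence of left $k\tilde G$-modules
$$0\to kI_{\tilde G}\to k\tilde G\to k\to 0.$$
Since $k\tilde G$ is free (hence flat) over itself, the associated long exact $\Tor$ sequence collapses to
$$0\to \Tor_1^{k\tilde G}(kG,k)\to kG\otimes_{k\tilde G} kI_{\tilde G}\xrightarrow{\tilde\alpha} kG\xrightarrow{\epsilon} k\to 0,$$
where I have used the canonical identifications $kG\otimes_{k\tilde G}k\tilde G\cong kG$ and $kG\otimes_{k\tilde G}k\cong k$ (the latter because the augmentation $\tilde G\to 1$ factors through $\phi$). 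Unwinding these identifications shows that $\tilde \alpha$ is the composition of $\alpha$ with the inclusion $kI_G\hookrightarrow kG$. By hypothesis $\alpha$ is an isomorphism, so $\tilde\alpha$ is injective, and therefore $\Tor_1^{k\tilde G}(kG,k)=0$.

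Second, I would identify $\Tor_1^{k\tilde G}(kG,k)$ with $k\otimes_{\Z}K_{\ab}$ by a change-of-rings argument. One has $kG\cong k\otimes_{kK}k\tilde G$ as right $k\tilde G$-modules, and $k\tilde G$ is free as a left $kK$-module (a basis being any set of coset representatives of $K$ in $\tilde G$). Hence for any free right $kK$-resolution $Q_\bullet\to k$ of the trivial module, the induced complex $Q_\bullet\otimes_{kK}k\tilde G\to kG$ is a free right $k\tilde G$-resolution of $kG$. Computing $\Tor$ with this resolution yields
$$\Tor_*^{k\tilde G}(kG,k)\cong H_*\bigl(Q_\bullet\otimes_{kK}k\bigr)\cong \Tor_*^{kK}(k,k)\cong H_*(K,k),$$
and the classical identification $H_1(K,k)\cong k\otimes_{\Z}K_{\ab}$ (via $g\mapsto g-1$ on representatives) finishes the identification.

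Combining the two steps gives $k\otimes_{\Z}K_{\ab}\cong \Tor_1^{k\tilde G}(kG,k)=0$. The argument is a straightforward piece of homological algebra; the main delicate point is keeping track of left versus right module structures and verifying that the middle arrow in the $\Tor$ long exact sequence really coincides with $\alpha$ followed by the inclusion $kI_G\hookrightarrow kG$, but this is just functoriality of $kG\otimes_{k\tilde G}-$ applied to $kI_{\tilde G}\hookrightarrow k\tilde G$.
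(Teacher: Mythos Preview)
Your proof is correct and follows essentially the same route as the paper: identify $k\otimes_{\Z}K_{\ab}$ with $\Tor_1^{k\tilde G}(kG,k)$ and read off its vanishing from the injectivity of $\alpha$ via the long exact $\Tor$ sequence associated to $0\to kI_{\tilde G}\to k\tilde G\to k\to 0$. The only cosmetic difference is that the paper invokes Shapiro's lemma for the identification $\Tor_1^{k\tilde G}(kG,k)\cong H_1(K,k)$, whereas you spell out the change-of-rings argument directly.
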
 
\begin{proof} Observe that by Shapiro's lemma 
$$k\otimes_{\Z} K_{\ab}\cong H_1(K,k) \cong H_1(\ti{G}, kG)=\Tor_1^{k\ti{G}}( kG,k).$$ Applying the right-exact functor $kG \otimes_{k\ti{G}} $ to the exact sequence  of  left $k\ti{G}$-modules
$$0\to kI_{\ti{G}}\to k\ti{G}\to k\to 0,$$  we get  the exact sequence

$$0\to \Tor_1^{k\ti{G}}(kG,k)\to  k{G} \otimes_{k\ti{G}} kI_{\ti{G}} \xrightarrow{\alpha} kG\to k\to 0.$$

Since $\alpha$ is injective,  $ \Tor_1^{k\ti{G}}(kG,k)=\{0\}$ and we are done.
\end{proof}

%\subsection{Pro-$p$ groups and their  completed group algebras.}

 %A free pro-$p$ group is residually-(torsion-free nilpotent). 
 
\subsection{$\D$-torsion-free modules}
Let $R\inc \D$ be an embedding of the ring $R$ into a division ring $\D$. Let $M$ be an $R$-module. We say that $M$ is \textit{$\D$-torsion-free} if the canonical map $M\lrar \D\tens{R}M$ is injective.

The following provides a more flexible criterion for verifying whether a module is torsion-free. 
\begin{lem} \cite[Lemma 4.1]{Ja20} \label{tfree}
Let    $M$ be an $R$-module. Then $M$ is $\D$-torsion-free if and only if there exists a $\D$-module $N$ and an injective homomorphism of $R$-modules $M\inc N$. 
\end{lem}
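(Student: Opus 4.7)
The plan is to prove both implications separately, and in each direction the content is essentially formal.

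For the ``only if'' direction, I would simply take $N = \D \tens{R} M$ itself. This is naturally a left $\D$-module, and by the definition of $\D$-torsion-freeness the canonical $R$-module map $M \to \D \tens{R} M$ is injective. So $M$ embeds as an $R$-module into a $\D$-module.

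For the ``if'' direction, the key observation is that for any left $\D$-module $N$, viewed as an $R$-module via the inclusion $R \hookrightarrow \D$, the canonical map $j_N \colon N \to \D \tens{R} N$ given by $n \mapsto 1 \otimes n$ is split injective. Indeed, the scalar-multiplication map $\mu_N \colon \D \tens{R} N \to N$, $d \otimes n \mapsto dn$, is a well-defined $R$-module homomorphism (because $N$ is a genuine $\D$-module) and satisfies $\mu_N \circ j_N = \id_N$. Now given an injective $R$-module homomorphism $\iota \colon M \hookrightarrow N$, functoriality of $\D \tens{R} -$ yields the commutative square
\[
\begin{array}{ccc} M & \xrightarrow{j_M} & \D \tens{R} M \\ \downarrow\iota & & \downarrow \iota_* \\ N & \xrightarrow{j_N} & \D \tens{R} N. \end{array}
\]
The composition $j_N \circ \iota$ is injective, as a composition of two injective maps, and it equals $\iota_* \circ j_M$; hence $j_M \colon M \to \D \tens{R} M$ is injective, which is exactly the statement that $M$ is $\D$-torsion-free.

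There is no real obstacle in this argument: once one notices that $\D$-modules are always $\D$-torsion-free (via the splitting $\mu_N$), the result reduces to a diagram chase. The only small point worth writing carefully is why $\mu_N$ is well defined on the tensor product $\D \tens{R} N$, which uses precisely that the $R$-action on $N$ is the restriction of its $\D$-action.
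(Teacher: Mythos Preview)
Your proof is correct. The paper does not give its own proof of this lemma; it is quoted from \cite[Lemma 4.1]{Ja20} without argument, so there is nothing to compare against beyond noting that your argument is the standard one and would serve perfectly well as a self-contained proof here.
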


Given a homomorphism $R\to \D$, where $\D$ is a division ring, we define \textit{$\D$-dimension of $M$} to be  the dimension of the $\D$-vector space $\D\otimes_R M$. We denote it by $\dim_{\D} M$. 

\begin{lem}\label{sestorfree} \cite[Lemma 4.2]{Ja20}
 Let $1\rar M_1\rar M_2\rar M_3\rar 0$ be an exact sequence of $R$-modules. Assume that 
\begin{enumerate}
    \item $M_1$ and $M_3$ are $\D$-torsion-free;
    \item $\dim_{\D}M_1$ and $\dim_{\D}M_3$ are finite;  and
    \item $\dim_{\D} M_1+\dim_{\D} M_3=\dim_{\D} M_2$. 
\end{enumerate}
Then $M_2$ is also $\D$-torsion-free.
\end{lem}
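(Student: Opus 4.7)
The plan is to apply the right-exact functor $\D\otimes_R -$ to the short exact sequence and combine a dimension count with a small diagram chase. Applying the functor produces an exact sequence
$$\D\otimes_R M_1 \xrightarrow{\phi} \D\otimes_R M_2 \to \D\otimes_R M_3 \to 0.$$
The first observation is that hypothesis (3), together with the finiteness in (2), forces $\phi$ to be injective. Indeed, from the exact sequence above,
$$\dim_{\D}M_2 = \dim_{\D}\im(\phi) + \dim_{\D}M_3 \le \dim_{\D}M_1 + \dim_{\D}M_3 = \dim_{\D}M_2,$$
so equality holds throughout, which means $\dim_{\D}\im(\phi)=\dim_{\D}M_1$, and as $\phi$ maps between finite-dimensional $\D$-vector spaces of the same dimension with image of full dimension, it is injective.

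Next I would run an element chase. Let $f:M_1\hookrightarrow M_2$ and $g:M_2\twoheadrightarrow M_3$ denote the maps in the original sequence, and suppose $x\in M_2$ is in the kernel of $M_2\to \D\otimes_R M_2$, i.e.\ $1\otimes x =0$. Pushing to $M_3$, we see that $g(x)\in M_3$ has trivial image in $\D\otimes_R M_3$; since $M_3$ is $\D$-torsion-free by (1), we conclude $g(x)=0$, so there exists $y\in M_1$ with $f(y)=x$. Now apply $\phi$: the element $1\otimes y\in \D\otimes_R M_1$ satisfies $\phi(1\otimes y)=1\otimes f(y)=1\otimes x=0$, and by the injectivity of $\phi$ proved above, $1\otimes y=0$ in $\D\otimes_R M_1$. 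Using that $M_1$ is $\D$-torsion-free we get $y=0$, whence $x=f(y)=0$. This shows $M_2\to \D\otimes_R M_2$ is injective, i.e.\ $M_2$ is $\D$-torsion-free.

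I do not anticipate a genuine obstacle here; the only point that might look delicate is the use of right-exactness to deduce injectivity of $\phi$, which really does require finiteness of $\dim_{\D}M_1$ and $\dim_{\D}M_3$ (hypothesis (2)) so that the inequality $\dim_{\D}M_2\le \dim_{\D}M_1+\dim_{\D}M_3$ combined with the equality in (3) can be upgraded to injectivity. Once this is in place, the rest is a routine diagram chase.
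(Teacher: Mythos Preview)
Your argument is correct. The paper does not give its own proof of this lemma; it merely cites \cite[Lemma~4.2]{Ja20}, so there is nothing to compare against. One cosmetic remark: in the sentence ``$\phi$ maps between finite-dimensional $\D$-vector spaces of the same dimension'' the phrase ``of the same dimension'' is inaccurate (the codomain has dimension $\dim_{\D}M_2$, not $\dim_{\D}M_1$); what you mean, and what you actually use, is that $\dim_{\D}\im(\phi)=\dim_{\D}M_1<\infty$, which forces $\ker\phi=0$ by rank--nullity over the division ring $\D$.
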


The following lemma will be used in the computation of $\dim _{\D} M$.

\begin{lem} \label{quotientdim} \cite[Lemma 4.3]{Ja20}
Let $M$ be a $\D$-torsion-free $R$-module of finite $\D$-dimension. Let $L$ be a non-trivial $R$-submodule of $M$. Then $\dim_{\D}(M/L)<\dim_{\D} (M)$. Moreover, if $\dim_{\D} L=1$, then $\dim_{\D} (M/L)=\dim_{\D}M -1$. 
\end{lem}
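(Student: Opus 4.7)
The plan is to extract everything from the right-exact sequence obtained by applying $\D \otimes_R -$ to $0 \to L \to M \to M/L \to 0$, together with the injectivity $M \hookrightarrow \D \otimes_R M$ coming from $\D$-torsion-freeness.

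First, I would apply the right-exact functor $\D \otimes_R -$ to get an exact sequence
\[\D \otimes_R L \xrightarrow{\iota} \D \otimes_R M \lrar \D \otimes_R (M/L) \lrar 0.\]
Let $N = \D \otimes_R M$, a $\D$-vector space of dimension $\dim_\D M$, and let $\D L \sub N$ denote the image of $\iota$. Then $\D \otimes_R (M/L) \cong N / \D L$, so
\[\dim_\D (M/L) = \dim_\D N - \dim_\D(\D L) = \dim_\D M - \dim_\D(\D L).\]
Thus the strict inequality $\dim_\D(M/L) < \dim_\D M$ will follow the moment I show $\D L \neq 0$. Here the torsion-freeness hypothesis enters: the canonical map $M \hookrightarrow N$ is injective, so for any nonzero $\ell \in L$ the element $1 \otimes \ell$ is nonzero in $N$. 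But $1 \otimes \ell$ lies in $\D L$, so $\D L \neq 0$. This handles the first assertion.

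For the ``moreover'' part I need to upgrade this to $\dim_\D(\D L) = 1$ under the assumption $\dim_\D L = 1$. The key observation is that $L$ inherits $\D$-torsion-freeness from $M$: since $L \hookrightarrow M \hookrightarrow N$, and $N$ is a $\D$-module, Lemma \ref{tfree} applies to $L$, giving an injection $L \hookrightarrow \D \otimes_R L$. Now pick any nonzero $\ell_0 \in L$; its image in $\D \otimes_R L$ is nonzero, and, since $\D \otimes_R L$ is $1$-dimensional, it spans this space. Consequently $\iota(\D \otimes_R L) = \D \cdot (1 \otimes \ell_0) \sub N$, which is either $0$ or $1$-dimensional; and it is exactly $1$-dimensional because, again by the torsion-freeness of $M$, $1 \otimes \ell_0 \neq 0$ in $N$. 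Plugging $\dim_\D(\D L) = 1$ into the identity above yields $\dim_\D(M/L) = \dim_\D M - 1$.

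The only subtle point is that $\D \otimes_R -$ is not left-exact in general, so one cannot simply claim $\dim_\D L = \dim_\D(\D L)$ by flatness; one must argue by picking a concrete nonzero element of $L$ and tracking it through the injection $L \hookrightarrow N$ provided by the torsion-freeness. I expect this is the only genuine obstacle, and the rest is a direct accounting of $\D$-dimensions in the right-exact sequence.
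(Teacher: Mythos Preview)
Your argument is correct. The paper does not supply its own proof of this lemma; it simply cites \cite[Lemma 4.3]{Ja20}, so there is nothing to compare against beyond noting that your approach is the natural one.

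One minor simplification: in the ``moreover'' part you do not actually need to verify that $L$ is $\D$-torsion-free via Lemma~\ref{tfree}. Since $\D L$ is the image of the $1$-dimensional space $\D\otimes_R L$ under $\iota$, it has dimension at most $1$; and since it contains the nonzero element $1\otimes \ell_0\in N$ (nonzero by the $\D$-torsion-freeness of $M$ alone), it has dimension exactly $1$. Your route through the torsion-freeness of $L$ is valid, just slightly longer than necessary.
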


\subsection{The universal division rings of fractions of the  group algebra of a subgroup of a free pro-$p$ group} \label{universalsf}
Let $\FF$ be a finitely generated free pro-$p$ group. If  $G$ is an abstract subgroup of $\FF$, then 
it turns out that   $\F_p G$ has  a universal division ring of fractions,  denoted    by   $\D_{\p G}$.
 We will not give a formal definition of universal field of fractions which can be found in \cite{Ja21}, but we will describe the main properties of the embeddings $\p G\subset \D_{\p G}$, which we will use in this paper.
 
 If $H$ is a subgroup of $G$, then the division  closure of $\F_p H$ in $\D_{\p G}$ is the universal division ring of fractions of $\p H$, and, therefore, we will denote it by $\D_{\p H}$.
 
  If $N$ is a normal subgroup of $G$ such that $G/N\cong \Z$, then $\F_p G$ is a free $\F_p N$-module with the basis $\{t^i:i\in \Z\}$, where $t\in G$ and $Nt$ generates $G/N$. Thus  $\p G$ is isomorphic to a skew-polynomial ring  $\F_p N[t^{\pm 1}, \sigma]$ with the indeterminate $t$ and  coefficients in $\p N$. Here $\sigma$ is an automorphism of the ring  ${\p N}$, induced from the automorphism of conjugation-by-$t$.

 Let $R$ be a subring of $\D_{\p G}$ generated by $\D_{\p N}$ and $t$. It turns out that $\{t^i:i\in \Z\}$ is also a $\D_{\p N}$-basis of $R$ (this is so called  the \textit{Hughes-free property} of the embedding $\p G\subset \D_{\p G}$, see \cite{Hu70, DHS08}). thus  $R$ is isomorphic to  $\D_{\p N}[t^{\pm 1}, \sigma]$. %In particular, $R$ is Noetherian domain and its Ore division ring of fractions $Q_{\ore}(R)$ is isomorphic to $\D_{\p H}$.
 
 The following result gives a lower bound for the $\D_{\p G}$-dimension of $\p I_G$.
 \begin{pro}\cite[Corollary 3.7 and the discussion afterwards]{Ja20} \label{Bettibound}
 Let $G$ be a finitely generated dense subgroup of a free pro-$p$ group $\FF$. Then $\dim_{\D_{\p G}} \p I_G\ge d(\FF).$
 Moreover, if $G$ is parafree and $\FF=G_{\hat p}$, then  $\dim_{\D_{\p G}} \p I_G= d(\FF).$
 \end{pro}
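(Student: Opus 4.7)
The plan is to bracket $\dim_{\D_{\p G}}\p I_G$ by $d(\FF)$ from both sides, with the lower bound being the general statement and the upper bound (valid in the parafree case) giving equality.

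For the lower bound, I would use the dense inclusion $G\hookrightarrow \FF$ to obtain $\p G\hookrightarrow \p[[\FF]]$, and recall the Magnus-type identification of the augmentation ideal of $\p[[\FF]]$ as a free left $\p[[\FF]]$-module of rank $d(\FF)$, with basis $\{x_i-1\}$ for free pro-$p$ generators $x_1,\dots,x_{d(\FF)}$. Density lets one pick $g_1,\dots,g_{d(\FF)}\in G$ whose images generate $\FF$ modulo its Frattini subgroup, so the elements $g_i-1\in \p I_G$ map to $d(\FF)$ independent generators of the augmentation ideal modulo its square. The heart of the lower bound is to transport this linear independence from $\p[[\FF]]$ to $\D_{\p G}$: working along a chain of normal subgroups of $G$ whose closures yield a corresponding chain in $\FF$, I would iterate the skew-polynomial description of $\D_{\p G}$ given in Section \ref{universalsf}, invoking Lemma \ref{tfree} to keep the intermediate modules $\D$-torsion-free and Lemma \ref{sestorfree} to track dimensions across short exact sequences, and conclude that the $d(\FF)$ elements remain $\D_{\p G}$-linearly independent after base change, yielding $\dim_{\D_{\p G}}\p I_G\ge d(\FF)$.

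For the parafree equality, the matching upper bound follows from Proposition \ref{kernelmodp}. Pick a presentation $G=F/N$ with $F$ free on $d(F)$ generators; the resulting surjection $\p G\otimes_{\p F}\p I_F\twoheadrightarrow \p I_G$ has free source of rank $d(F)$ since $\p I_F$ is free of rank $d(F)$ over $\p F$, so tensoring with $\D_{\p G}$ bounds $\dim_{\D_{\p G}}\p I_G$ above by $d(F)$. The parafreeness hypothesis, via Proposition \ref{carparafree}, identifies every $p$-lower central quotient of $G$ with the corresponding quotient of a free group of rank $d(\FF)$, and this should force the kernel of the tensored surjection (a quotient of the relation module $\p\otimes_\Z N_{\ab}$) to have $\D_{\p G}$-dimension exactly $d(F)-d(\FF)$, trimming the upper bound to $d(\FF)$.

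The main obstacle throughout is this transfer from free-module rank in $\p[[\FF]]$ to $\D_{\p G}$-dimension. The Hughes-free property of the embedding $\p G\subset \D_{\p G}$ is essential: it is what allows one to peel off one $\Z$-quotient at a time in the chain of normal subgroups of $G$, matching skew-polynomial structure in $\D_{\p G}$ with the corresponding filtration in $\p[[\FF]]$ without losing rank at any stage, so that the iterated application of Lemma \ref{sestorfree} correctly accounts for dimensions at the end.
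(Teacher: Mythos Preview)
The paper does not prove this proposition at all; it is quoted directly from \cite[Corollary~3.7 and the discussion afterwards]{Ja20}, so there is no in-paper argument to compare against. The actual proof in \cite{Ja20} relies on comparing the Sylvester rank function on $\p G$ induced by $\D_{\p G}$ with the one coming from the embedding $\p G\hookrightarrow\p[[\FF]]$ and the skew field of fractions of the power-series ring $\p[[\FF]]$, together with L\"uck-type approximation for mod-$p$ Betti numbers; none of that machinery is reproduced here.

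Your sketch, taken on its own, has genuine gaps in both halves. For the lower bound, choosing $g_1,\dots,g_{d(\FF)}\in G$ that topologically generate $\FF$ is fine, but the heart of the matter is that $g_1-1,\dots,g_{d(\FF)}-1$ are $\D_{\p G}$-linearly independent in $\D_{\p G}\otimes_{\p G}\p I_G$, not merely $\p[[\FF]]$-independent. Your proposed ``iterate the skew-polynomial description along a chain of normal subgroups'' is not an argument: Lemmas~\ref{tfree} and~\ref{sestorfree} let you propagate torsion-freeness and add dimensions across short exact sequences, but you have not produced any such sequence whose terms have known $\D_{\p G}$-dimension and whose middle term is the submodule spanned by the $g_i-1$. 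The Hughes-free property describes how $\D_{\p N}$ sits inside $\D_{\p G}$ when $G/N\cong\Z$; it does not by itself transport linear independence from $\p[[\FF]]$ to $\D_{\p G}$. What is missing is exactly the rank-function comparison that \cite{Ja20} establishes.

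For the parafree equality your argument is weaker still. The easy bound $\dim_{\D_{\p G}}\p I_G\le d(F)$ from a free presentation $F\twoheadrightarrow G$ is correct, but Proposition~\ref{kernelmodp} is invoked in the wrong direction: it deduces vanishing of $\p\otimes_\Z K_{\ab}$ \emph{from} injectivity of the tensored augmentation map, and says nothing about the $\D_{\p G}$-dimension of the relation module. The sentence ``this should force the kernel to have $\D_{\p G}$-dimension exactly $d(F)-d(\FF)$'' is precisely the statement to be proved, and knowing $G/G_{n,p}\cong F_{d(\FF)}/(F_{d(\FF)})_{n,p}$ for all $n$ does not obviously control the $\D_{\p G}$-rank of the Fox Jacobian of an arbitrary presentation. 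Again one needs the comparison with $\p[[\FF]]$, where the augmentation ideal is genuinely free of rank $d(\FF)$, to close the gap.
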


The  following result provides an important example of $\D_{\F_p G}$-torsion-free-module.

\begin{pro} \label{Atorfree} \cite[Proposition 4.8]{Ja20}  Let $\FF$ be a free pro-$p$ group,  $H\leq G\leq \FF$ two  subgroups of  $\FF$, and $A$ a maximal abelian subgroup of $H$. Then the $\F_pG$-module
$\F_p I_H^G\, \big/\, \F_p I_A^G$
is $\D_{\F_p G}$-torsion-free. 
\end{pro}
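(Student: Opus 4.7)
By Lemma \ref{tfree}, it suffices to exhibit an $\F_p G$-module embedding of $\F_p I_H^G/\F_p I_A^G$ into some $\D_{\F_p G}$-module. The plan is to reduce first to the analogous statement within $H$, and then to construct the embedding using the malnormality of $A$ in $H$ that is forced by maximal abelianness.

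For the reduction, since $\F_p G$ is a free right $\F_p H$-module, the induction functor $\F_p G\otimes_{\F_p H}-$ is exact and yields a canonical isomorphism
$$\F_p I_H^G/\F_p I_A^G\;\cong\;\F_p G\otimes_{\F_p H}(\F_p I_H/\F_p I_A^H).$$
Every $\D_{\F_p H}$-module is flat as an $\F_p H$-module, since it is free over the flat localization $\D_{\F_p H}$ of $\F_p H$ (see Subsection \ref{universalsf}). Hence an $\F_p H$-module embedding $\F_p I_H/\F_p I_A^H\hookrightarrow N$ into a $\D_{\F_p H}$-module $N$ induces, by tensoring first with $\F_p G$ and then with $\D_{\F_p G}$, an embedding of $\F_p I_H^G/\F_p I_A^G$ into the $\D_{\F_p G}$-module $\D_{\F_p G}\otimes_{\D_{\F_p H}}N$. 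The problem is thereby reduced to showing that $\F_p I_H/\F_p I_A^H$ is $\D_{\F_p H}$-torsion-free.

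For this remaining task, the crucial consequence of maximal abelianness is that $A$ is malnormal in $H$: the centralizer in $\FF$ of any nontrivial $a\in A$ is procyclic, and procyclic subgroups of free pro-$p$ groups are malnormal, so any abelian subgroup of $H$ containing $a$ is contained in this procyclic closure intersected with $H$, which equals $A$ by maximality, and a short argument yields $A\cap hAh^{-1}=\{1\}$ for every $h\in H\setminus A$. Exploiting the coset decomposition $\F_p H=\bigoplus_{t\in T}t\cdot \F_p A$ for a left transversal $T\ni 1$ of $A$ in $H$, I would then construct an explicit $\F_p H$-equivariant injection of $\F_p I_H/\F_p I_A^H$ into the $\F_p H$-submodule $\bigoplus_{t\in T}t\cdot \D_{\F_p A}$ of $\D_{\F_p H}$, sending the class of $t-1\in \F_p I_H$ (for $t\neq 1$) to a chosen generator of the $t$-th summand and extending $\F_p H$-linearly via the transversal action.

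The main obstacle is the explicit construction and verification of the three key properties of this candidate map: well-definedness on the quotient, $\F_p H$-equivariance, and injectivity. Well-definedness comes from the fact that $\F_p I_A^H$ lies in the direct sum of $A$-invariant subspaces in a controlled way; equivariance requires using malnormality to handle the twist introduced by left multiplication, which permutes the transversal representatives while simultaneously acting on each summand by an element of $A$; injectivity then reduces to the inclusion $\F_p A\hookrightarrow \D_{\F_p A}$. The technical heart of the argument is a Hughes-free analysis of $\D_{\F_p H}$ relative to the non-normal subgroup $A$, extending the cyclic-quotient case described in Subsection \ref{universalsf}.
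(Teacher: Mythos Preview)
The paper does not prove this proposition at all: it is quoted verbatim from \cite[Proposition~4.8]{Ja20} and used as a black box. So there is no ``paper's proof'' to compare your attempt to; I can only assess whether your sketch stands on its own.

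It does not, and the failure is in the second half. Your proposed target $\bigoplus_{t\in T} t\cdot\D_{\F_p A}\subset \D_{\F_p H}$ and the map ``$[t-1]\mapsto$ generator of the $t$-th summand'' do not assemble into an $\F_p H$-linear map. Concretely, identify $\F_p I_H/\F_p I_A^H$ with the augmentation ideal of $\F_p[H/A]$; the class of $t-1$ corresponds to $[tA]-[A]$. Left multiplication by $h\in H$ sends this to $[htA]-[hA]$, which in your basis is $([t'A]-[A])-([t''A]-[A])$ with $ht=t'a$ and $h=t''a''$. Under your rule this would have to map to $t'-t''$, whereas $h\cdot t=t'a$ in $\D_{\F_p H}$. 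These do not agree, so there is no such equivariant map. More structurally, the ``Hughes-free analysis relative to the non-normal subgroup $A$'' that you invoke is precisely what is \emph{not} available: Hughes-freeness, as recalled in Subsection~\ref{universalsf}, is a statement about normal subgroups with infinite cyclic quotient, and you are asking for linear independence of a full transversal of a highly non-normal cyclic subgroup inside $\D_{\F_p H}$. That is not a known fact and is at least as hard as the proposition itself.

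Your reduction from $G$ to $H$ is reasonable in spirit, but the sentence ``$\D_{\F_p H}$ is a flat localization of $\F_p H$'' is doing real work that you have not justified: flatness of the universal division ring over the group algebra is a substantial statement (essentially a form of the strong Atiyah property) and is not among the facts collected in Subsection~\ref{universalsf}. A correct reduction only needs that $\F_p G\otimes_{\F_p H}\D_{\F_p H}$ embeds in $\D_{\F_p G}$, which is weaker but still needs an argument. In any case, the reduction is moot given that the construction over $H$ does not go through; the actual proof in \cite{Ja20} proceeds by a filtration/induction using the normal Hughes-free structure (as in the proof of Proposition~\ref{hnntor} here), not by a one-shot embedding relative to $A$.
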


\subsection{Embeddings of abstract groups into free pro-$p$ groups}
In this subsection we detail a method  introduced in \cite{Ja20} for ensuring that a map from an abstract group $\ti{G}$ to a free pro-$p$ group is injective. We are particularly interested in the problem of producing families of parafree groups. Let  $\ti{G}$ be a candidate to being parafree, meaning that $\ti{G}_{\hat{p}}$ is free for every prime $p$. We want to study whether the canonical map $\ti{G}\lrar \ti{G}_{\hat{p}}$ is an embedding for some suitable prime $p$. This would establish the residual nilpotence of $\ti{G}$ and we could conclude that $\ti{G}$ is parafree.

\begin{pro} \label{pro-$p$ embedding lemma} Let $\ti{G}$ be a finitely generated group, $\FF$  a finitely generated free pro-$p$ group and $\phi : \ti{G}\rar  \FF$   a group homomorphism. Suppose that we have the following conditions. 
\begin{enumerate}
    \item The image $G=\phi(\ti{G})$ is dense in $\FF$.
    \item The $\F_p G$-module $ \F_p G \tens{\F_p \ti{G}}\p I_{\ti{G}} $ is  $\D_{\F_p G}$-torsion-free  and $$\dim_{\D_{\F_p G}} \F_p G \tens{\F_p \ti{G}} \p I_{\ti{G}}=d  (\FF).$$
    \item The kernel of $\phi$ is free. 
\end{enumerate}
 Then the map $\phi$ is an embedding. 
\end{pro}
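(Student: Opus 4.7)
The plan is to combine Proposition \ref{kernelmodp}, applied to the quotient map $\phi : \ti G \twoheadrightarrow G = \phi(\ti G)$, with the dimension-theoretic machinery of the universal division ring of fractions $\D = \D_{\F_p G}$. The central step is to show that the canonical surjection of $\F_p G$-modules
\[
\alpha : \F_p G \tens{\F_p \ti G} \F_p I_{\ti G} \twoheadrightarrow \F_p I_G, \qquad a \otimes b \mapsto a\phi(b),
\]
is in fact an isomorphism; once this is done, the injectivity of $\phi$ follows quickly.

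To prove that $\alpha$ is injective, I would use hypotheses (1) and (2) together with Proposition \ref{Bettibound}. By (1), $G$ is a finitely generated dense subgroup of $\FF$, so Proposition \ref{Bettibound} yields the lower bound $\dim_\D \F_p I_G \geq d(\FF)$. By (2), the domain $M := \F_p G \tens{\F_p \ti G} \F_p I_{\ti G}$ is $\D$-torsion-free of finite $\D$-dimension equal to $d(\FF)$. If $\ker \alpha$ were non-zero, Lemma \ref{quotientdim} would force $\dim_\D(M/\ker\alpha) < d(\FF)$, contradicting the lower bound on $\dim_\D \F_p I_G$. Hence $\ker \alpha = 0$ and $\alpha$ is an isomorphism.

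Once $\alpha$ is an isomorphism, Proposition \ref{kernelmodp} applied with $k = \F_p$ gives $\F_p \otimes_\Z K_{\ab} = 0$, where $K = \ker \phi$; equivalently, $K_{\ab}$ is $p$-divisible. Hypothesis (3) says $K$ is free, so $K_{\ab}$ is a free abelian group, and the only $p$-divisible free abelian group is the trivial one. Therefore $K_{\ab} = 0$, whence $K = 1$, and $\phi$ is injective.

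I do not anticipate a genuine obstacle: the argument is essentially a clean assembly of the preliminary results. The slightly delicate point is the injectivity of $\alpha$, which rests on the non-obvious fact (Lemma \ref{quotientdim}) that a non-trivial quotient of a $\D$-torsion-free module of finite $\D$-dimension has strictly smaller $\D$-dimension; once this is in hand, the rest is bookkeeping.
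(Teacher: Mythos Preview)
Your proposal is correct and follows essentially the same approach as the paper: show that $\alpha$ is an isomorphism by combining the $\D_{\F_p G}$-torsion-freeness and dimension hypothesis on $M$ with the lower bound from Proposition~\ref{Bettibound} via Lemma~\ref{quotientdim}, then invoke Proposition~\ref{kernelmodp} and the freeness of $\ker\phi$ to conclude. The only cosmetic difference is that you spell out the final implication (free abelian and $p$-divisible forces trivial) in slightly more detail than the paper does.
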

\begin{proof}
We will prove that the surjective map $\phi: \ti{G}\rar G$ verifies the assumption of Proposition  \ref{kernelmodp} to deduce that $\ker \phi$ has trivial $p$-abelianization. Since $\ker \phi$ is free, the latter would imply that $\ker \phi=1$ and the conclusion would follow. 

It is clear that the natural  homomorphism of $\F_p G$-modules 
\[\F_p G \tens{\F_p \ti{G}} I_{\ti{G}}\rar \F_p I_{G},\]
defined by $a \otimes b\mapsto a\phi(b)$, is  surjective. If it was not injective, naming its kernel by $L$ and naming $M=\F_p G \tens{\F_p \ti{G}} I_{\ti{G}}$, we would deduce, after applying Lemma \ref{quotientdim}, \[d(\FF)= \dim_{\D_{\F_p G}} M>\dim_{\D_{\F_p G}}(M/L)=\dim_{\D_{\F_p G}}(\F_p I_{G}),\]
which would contradict Proposition \ref{Bettibound}. 
\end{proof}

In the setting of our problem, that is, taking some $\ti{G}$ with free pro-$p$ completion and studying whether $\ti{G}\lrar \ti{G}_{\hat{p}}$
is injective, we make a few comments about the three conditions of Proposition \ref{pro-$p$ embedding lemma}. We take $G$ to be the image of $\ti{G}$ inside  $\ti{G}_{\hat{p}} $.

\begin{enumerate}
    \item The first condition will be naturally ensured. 
    \item The second condition   is the hardest part and requires  the most technical arguments presented in  Section \ref{completedsection}.
    \item The third condition is natural from the point of view of the Bass-Serre theory. If we take   $\ti{G}$ to be the fundamental group of a graph of groups, and we ensure that  $\ker \phi$ intersects trivially every vertex subgroup, then the $\ker \phi$ will necessarily be free. 
\end{enumerate}

 \section{Examples of $\D_{\F_p G}$-torsion-free modules}\label{completedsection} \label{torfree}

In this section we construct two families of examples of  $\D_{\F_p G}$-torsion-free modules. The first result is a slight generalization of \cite[Proposition 4.10]{Ja20}.

\begin{pro} \label{amaltor} Let $H_1$ and $H_2$ be two finitely generated subgroups of a finitely generated free pro-$p$ group $\FF$. Consider $A=H_1\cap H_2$ and suppose that $A$ is a maximal abelian subgroup of $H_1$. Let $G=\lan H_1, H_2\ran$ and let 
\[J=\{(x, -x): x\in \p I_A^{G}\}\leq \p I_{H_1}^G\oplus \p I_{H_2}^G.\]
Then the $\p G$-module 
\[M=\frac{\p I_{H_1}^G\oplus \p I_{H_2}^G}{J}\]
is $\D_{\p G}$-torsion-free and \[\dim_{\D_{\p G}} M=\dim_{\D_{\p H_1}}\p I_{H_1}+\dim_{\D_{\p H_2}} \p I_{H_2}-1.\]
\end{pro}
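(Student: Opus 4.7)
The plan is to fit $M$ into a short exact sequence
\[0 \to \p I_{H_2}^G \xrightarrow{\iota} M \xrightarrow{\pi} \p I_{H_1}^G/\p I_A^G \to 0\]
and apply Lemma~\ref{sestorfree}. Set $\iota(y) = (0,y) + J$ and $\pi((x,y)+J) = x + \p I_A^G$. The map $\iota$ is injective because $(0,y)\in J$ forces $y=0$; $\pi$ is well defined since $\pi(a,-a) = a + \p I_A^G = 0$ for every $a\in\p I_A^G$; the composition $\pi\circ\iota$ plainly vanishes; and if $\pi((x,y)+J)=0$ then $x\in\p I_A^G$, so $(x,y)+J = (0,y+x)+J = \iota(y+x)$, giving exactness at $M$. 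Surjectivity of $\pi$ is immediate.

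Next I verify the hypotheses of Lemma~\ref{sestorfree}. The term $\p I_{H_1}^G/\p I_A^G$ is $\D_{\p G}$-torsion-free by Proposition~\ref{Atorfree}, since $A$ is maximal abelian in $H_1$; and $\p I_{H_2}^G = \p G\tens{\p H_2}\p I_{H_2}$ embeds in $\p G$ (as $\p G$ is free as a right $\p H_2$-module under coset representatives) and hence in $\D_{\p G}$, so Lemma~\ref{tfree} makes it $\D_{\p G}$-torsion-free. For the dimensions, the embedding $\D_{\p H_i}\hookrightarrow\D_{\p G}$ recorded in Section~\ref{universalsf} makes $\D_{\p G}$ a (necessarily free) vector space over $\D_{\p H_i}$, and a standard base-change argument gives
\[\dim_{\D_{\p G}} \p I_{H_i}^G = \dim_{\D_{\p G}}\bigl(\D_{\p G}\tens{\D_{\p H_i}}(\D_{\p H_i}\tens{\p H_i}\p I_{H_i})\bigr) = \dim_{\D_{\p H_i}}\p I_{H_i}\]
for $i=1,2$, and similarly $\dim_{\D_{\p G}}\p I_A^G = \dim_{\D_{\p A}}\p I_A$. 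The latter equals $1$: indeed $A$ is non-trivial (as it is a maximal abelian subgroup of the non-trivial $H_1$) and abelian, so $\p A$ is a commutative domain, $\D_{\p A} = \Frac(\p A)$ is a flat localization, and applying $\D_{\p A}\tens{\p A}-$ to $0\to\p I_A\to\p A\to\p\to 0$ together with the vanishing of $\D_{\p A}\tens{\p A}\p$ (any non-zero element of $\p I_A$ is a unit in the field $\D_{\p A}$) yields $\D_{\p A}\tens{\p A}\p I_A\cong\D_{\p A}$. Since $\p I_A^G$ sits as a submodule of $\D_{\p G}$-dimension $1$ inside the torsion-free $\p I_{H_1}^G$, Lemma~\ref{quotientdim} gives $\dim_{\D_{\p G}}(\p I_{H_1}^G/\p I_A^G) = \dim_{\D_{\p H_1}}\p I_{H_1} - 1$.

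It remains to compute $\dim_{\D_{\p G}} M$ independently and check that it equals the sum of the two end dimensions. From the presentation $0\to J\to\p I_{H_1}^G\oplus\p I_{H_2}^G\to M\to 0$, right-exactness of $\D_{\p G}\tens{\p G}-$ together with the observation that $J\cong\p I_A^G$ injects as a one-dimensional subspace of the torsion-free $\D_{\p G}\tens{\p G}(\p I_{H_1}^G\oplus\p I_{H_2}^G)$ yields
\[\dim_{\D_{\p G}} M = \dim_{\D_{\p H_1}}\p I_{H_1} + \dim_{\D_{\p H_2}}\p I_{H_2} - 1,\]
which matches the sum of the dimensions of the two end terms of the short exact sequence above. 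Lemma~\ref{sestorfree} then delivers both the $\D_{\p G}$-torsion-freeness of $M$ and the claimed dimension formula. The heavy technical input is Proposition~\ref{Atorfree}, invoked as a black box; beyond that the main obstacle is mostly the bookkeeping of $\D_{\p G}$-dimensions along the chain $\p H_i\subset\p G\subset\D_{\p G}$, with the most delicate point being the identification $\dim_{\D_{\p A}}\p I_A = 1$ via flatness of the Ore localization $\D_{\p A}/\p A$.
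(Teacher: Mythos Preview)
Your proof is correct and follows essentially the same approach as the paper: the paper defines $L=(\p I_A^G\oplus\p I_{H_2}^G)/J$, observes $L\cong\p I_{H_2}^G$ and $M/L\cong\p I_{H_1}^G/\p I_A^G$, and applies Lemma~\ref{sestorfree} to $0\to L\to M\to M/L\to 0$---which is exactly your short exact sequence, since the image of your $\iota$ is $L$. Your independent computation of $\dim_{\D_{\p G}} M$ via the presentation $0\to J\to\p I_{H_1}^G\oplus\p I_{H_2}^G\to M\to 0$ also mirrors the paper's use of Lemma~\ref{quotientdim}, and your justification that $\dim_{\D_{\p A}}\p I_A=1$ matches the paper's observation~(\ref{dimA}).
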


Before giving the proof, we shall make an observation. If $A$ is an  abelian subgroup of $\FF$, as occurs in the previous proposition,  the universal division $\p A$-ring of fractions $\D_{\p A}$ is the field of fractions $Q_{ore}(\p A)$ of the commutative domain $\p A$. Given a nontrivial ideal $I$ of  $ \p A$, we see that the $Q_{\ore}(\p A)$-vector space $Q_{\ore}(\p A)\tens{\p A} I$ is one-dimensional.   This implies that 
\begin{equation}\label{dimA}
\dim_{\D_{\p A}} I=\dim_{Q_{\ore}(\p A)} Q_{\ore}(\p A)\tens{\p A} I=1.
\end{equation}

\begin{proof}[Proof of Proposition \ref{amaltor}] We consider the $\p G$-submodule of $M$ defined by
 $$L=(\p I_{A}^G\oplus \p I_{H_2}^G)/J.$$
We want to apply Lemma  \ref{sestorfree} to the short exact sequence $$0\rar L\rar M\rar M/L\rar 0$$ of $\p G$-modules. 
Since $L\cong \p I_{H_2}^G\leq \p G$, then it is $\D_{\p G}$-torsion-free and 
\[\dim_{\D_{\p G}} L=\dim_{\D_{\p H_2}} \p I_{H_2}.\] 
By the observation (\ref{dimA}), $$1=\dim_{\D_{\p A}} \p I_A=\dim_{\D_{\p G}} \p I_A^G=\dim_{\D_ {\p G}} J.$$ 
So it is clear, by Lemma  \ref{quotientdim}, that 
\begin{multline*}
\dim_{\D_{\p G}} M= \dim_{\D_{\p G}} \p I_{H_1}^G+\dim_{\D_{\p G} }\p I_{H_2}^G-1=\\ \dim_{\D_{\p H_1}}\p I_{H_1}+\dim_{\D_{\p H_2}} \p I_{H_2}-1.\end{multline*}

 On the other side, the quotient $M/L$ is isomorphic to $\p I_{H_1}^G/\p I_A^G$, which is $\D_{\p G}$-torsion-free by Proposition \ref{Atorfree}.
Again, by Lemma  \ref{quotientdim},   \[\dim_{\D_{\p G}} M/L=\dim_{\D_{\p G}} \p I_{H_1}^G-\dim_{\D_{\p G}}\p  I_A^G=\dim_{\D_{\p H_1}} \p I_{H_1}-1.\] 

Therefore, the short exact sequence $0\rar L\rar M\rar M/L\rar 0$ of $\p G$-modules satisfies the requirements of Lemma \ref{sestorfree}; and the conclusion follows. 
\end{proof}

We now turn to the study of torsion-free modules that have the form of an augmentation ideal of a cyclic HNN extension. 

Our point is to prove that a certain module, which may have the form 
\[R^m /R(u_1, \dots, u_m)\]
for some group ring $R$,
is  $\D_R$-torsion-free. When extending this $R$-module with coefficients in a bigger ring, some $u_i$ may become invertible. The following elementary lemma simply studies this scenario, which we shall encounter many times.

\begin{lem} \label{freequo} Let $R$ be a unital ring and  $M$ be an $R$-module. Let $m_0\in M$ and let $u$ be a unit of $R$. Then there is an isomorphism of $R$-modules
\[\ga: \frac{M\oplus R}{(m_0, u)}\lrar M \]
given by 
\[\ga (m, r)=m-ru^{-1}m_0,\]
with inverse 
\[\ga^{-1}(m)=(m, 0).\]
\end{lem}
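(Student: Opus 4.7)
The plan is to verify the stated formulas directly. First I would define an auxiliary $R$-module homomorphism
\[\tilde\gamma\colon M\oplus R\lrar M,\qquad \tilde\gamma(m,r)=m-ru^{-1}m_0,\]
which is clearly $R$-linear, and then check that the cyclic submodule $R(m_0,u)\sub M\oplus R$ lies in its kernel: indeed
\[\tilde\gamma(s m_0, su)= sm_0 - (su)u^{-1} m_0 = sm_0 - sm_0 = 0\]
for every $s\in R$. Hence $\tilde\gamma$ factors through $(M\oplus R)/R(m_0,u)$, yielding the map $\gamma$ of the statement.

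Next I would introduce the candidate inverse $\delta\colon M\lrar (M\oplus R)/R(m_0,u)$ sending $m\mapsto \overline{(m,0)}$, which is manifestly $R$-linear. The composition $\gamma\circ\delta$ is the identity on $M$ since $\gamma(m,0)=m-0=m$.

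The only step where the hypothesis on $u$ plays a role is checking $\delta\circ\gamma=\iid$. Given $(m,r)\in M\oplus R$, we have
\[\delta(\gamma(m,r))=\overline{(m-ru^{-1}m_0,\, 0)},\]
and the difference with $\overline{(m,r)}$ is $\overline{(-ru^{-1}m_0,\, -r)}=-ru^{-1}\cdot\overline{(m_0,u)}=0$, where invertibility of $u$ is precisely what allows us to exhibit $(ru^{-1}m_0, r)$ as a left $R$-multiple of $(m_0,u)$. This shows $\gamma$ and $\delta$ are mutually inverse, completing the proof.

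There is no real obstacle here; the lemma is a routine verification whose content is the remark that when $u\in R^{\times}$, the relation $(m_0,u)\sim 0$ forces the second coordinate of the quotient to be entirely determined by, and absorbable into, the first.
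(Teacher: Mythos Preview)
Your proof is correct and complete. The paper itself does not give a proof of this lemma --- it is introduced as an ``elementary lemma'' and stated without argument --- so your direct verification is exactly what was intended, and there is nothing to compare.
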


We can now state  the second result of this section. 
\begin{pro}\label{hnntor} Let $H\leq G$ be subgroups of $\FF$. Suppose that we can write $G=N\rtimes (t)=\lan N, t\ran$ for some $H\leq N\leq G$ and some $t\in G$. Let $u\in H$ be an element which generates a maximal abelian subgroup $\lan u\ran $ in $H$ and suppose that $v=tut^{-1}\in H$. Then the $\p G$-module $M$ defined by 
\[M=\frac{\p I_H^G\oplus \p G}{\p G\,  (v-1-t(u-1), v-1)}\]
is $\D_{\p G}$-torsion-free. 
\end{pro}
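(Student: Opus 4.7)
The strategy is to mimic the proof of Proposition \ref{amaltor} by exhibiting a short exact sequence of $\p G$-modules involving $M$ and then applying Lemma \ref{sestorfree}.

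I would first consider the $\p G$-submodule $L \subseteq M$ generated by the class $\overline{(0, 1)}$. Since $\p G$ is a domain (as it embeds in the division ring $\D_{\p G}$) and $v - 1 \ne 0$, the equation $(0, r) = s \cdot ((v-1)(1-t), v-1)$ forces $s = 0$ and hence $r = 0$. Therefore $r \mapsto \overline{(0, r)}$ is an isomorphism $\p G \xrightarrow{\sim} L$, and $L$ is $\D_{\p G}$-torsion-free with $\dim_{\D_{\p G}} L = 1$; projecting onto the first coordinate identifies $M/L$ with $\p I_H^G / \p G(v-1)(1-t)$.

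Next I would carry out a dimension count using Lemma \ref{freequo} applied with the unit $v-1 \in \D_{\p G}^\times$, which gives $\D_{\p G} \otimes_{\p G} M \cong \D_{\p G} \otimes_{\p G} \p I_H^G$ and hence $\dim_{\D_{\p G}} M = \dim_{\D_{\p G}} \p I_H^G$. Since $(v-1)(1-t)$ is a unit in $\D_{\p G}$ and $(v-1)(1-t)$ is nonzero in $\p G$, the submodule $\p G(v-1)(1-t)$ contributes $\D_{\p G}$-dimension $1$; therefore $\dim_{\D_{\p G}} M/L = \dim_{\D_{\p G}} \p I_H^G - 1$ and the dimensions in the short exact sequence $0 \to L \to M \to M/L \to 0$ match. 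Lemma \ref{sestorfree} reduces the problem to proving that $M/L = \p I_H^G / \p G(v-1)(1-t)$ is $\D_{\p G}$-torsion-free.

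For this, the driving identity is $(v-1)(1-t) = (v-1) - t(u-1)$, which implies $(v-1)(1-t) \equiv v-1 \pmod{\p I_A^G}$ for $A = \langle u\rangle$ and therefore $\p I_A^G + \p G(v-1)(1-t) = \p I_B^G$ with $B = \langle u, v\rangle \le H$. I would then analyse the auxiliary short exact sequence
\[ 0 \to \frac{\p I_A^G}{\p I_A^G \cap \p G(v-1)(1-t)} \to \frac{\p I_H^G}{\p G(v-1)(1-t)} \to \frac{\p I_H^G}{\p I_B^G} \to 0, \]
whose right-hand term is a quotient of $\p I_H^G / \p I_A^G$ --- which is $\D_{\p G}$-torsion-free by Proposition \ref{Atorfree}, since $A$ is maximal abelian in $H$ --- and whose left-hand term sits inside the cyclic $\D_{\p G}$-torsion-free module $\p I_A^G \cong \p G$.

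The main obstacle is verifying that both flanks of this auxiliary sequence are themselves $\D_{\p G}$-torsion-free with dimensions summing to $\dim_{\D_{\p G}} \p I_H^G - 1$, so that Lemma \ref{sestorfree} can be applied a second time. I expect this step to require the semidirect product decomposition $G = N \rtimes \langle t\rangle$ together with the Hughes-free inclusion $\D_{\p N}[t^{\pm 1}; \sigma] \subseteq \D_{\p G}$ recalled in Section \ref{universalsf}: writing elements of $\p G$ in $t$-graded form with coefficients in $\p N$, the conditions defining $\p G(v-1)(1-t)$ and $\p I_B^G$ translate into concrete constraints on these coefficient sequences, and the maximality of $\langle u\rangle$ inside $H$ is precisely what should force the desired torsion-free conclusions.
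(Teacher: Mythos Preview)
Your first reduction is correct and in fact slightly cleaner than what the paper does: using the free summand $L\cong\p G$ and Lemma~\ref{sestorfree} you correctly reduce the problem to showing that
\[
M/L\;\cong\;\p I_H^G\big/\p G\bigl((v-1)-t(u-1)\bigr)
\]
is $\D_{\p G}$-torsion-free. The paper establishes exactly this statement (it is the injectivity of the map $\alpha'$ in its Claim~\ref{al}), but it does so \emph{directly} via the $t$-grading, not via a further short exact sequence.

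Your second short exact sequence, however, cannot be made to work. The right-hand flank $\p I_H^G/\p I_B^G$ with $B=\langle u,v\rangle$ is \emph{not} $\D_{\p G}$-torsion-free in general, so Lemma~\ref{sestorfree} does not apply. A concrete instance: take $H$ free on $a,b,c$, $u=a$, $v=b^2$ (this arises from the HNN extension $\langle a,b,c,t\mid tat^{-1}=b^2\rangle$, which satisfies all hypotheses of Theorem~\ref{hnnparafree}). Then $B=\langle a,b^2\rangle$ and, writing $\p I_H\cong(\p H)^3$ on the free generators $a-1,b-1,c-1$, one finds
\[
\p I_H/\p I_B^H\;\cong\;\p H/\p H(b+1)\ \oplus\ \p H.
\]
The summand $\p H/\p H(b+1)$ is nonzero but has $\D_{\p H}$-dimension~$0$, hence is pure torsion; tensoring up to $\p G$ preserves this. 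So being a quotient of the torsion-free module $\p I_H^G/\p I_A^G$ buys you nothing here, and no amount of $t$-graded bookkeeping on the flanks will repair this.

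The paper's route is to apply the $t$-grading directly to $M/L$ rather than to any further quotient. One passes through the intermediate ring $R=\D_{\p N}[t^{\pm 1},\sigma]\subseteq\D_{\p G}$ and shows (i) $M/L\hookrightarrow R\otimes_{\p G}(M/L)$ and (ii) $R\otimes_{\p G}M\hookrightarrow\D_{\p G}\otimes_{\p G}M$. Step~(ii) is soft (Lemma~\ref{freequo} plus Hughes-freeness). Step~(i) is the heart: writing an element of $\D_{\p N}\otimes_{\p N}\p I_H^G$ in $t$-components and using that $(v-1)-t(u-1)$ shifts degree by one, an induction on the top $t$-degree reduces everything to the single $\p N$-level fact that $\p I_H^N/\p N(u-1)$ is $\D_{\p N}$-torsion-free --- and \emph{that} is precisely Proposition~\ref{Atorfree} with $A=\langle u\rangle$ maximal abelian in $H$. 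The maximality hypothesis is consumed at each $t$-degree separately, not globally through a subgroup like $B$.
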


\begin{proof} The proof is relatively long and, for convenience, it is divided in several intermediate claims. 

Let  $R$ be the subring of $\D_{\p G}$ generated by $\D_{\p N}$ and $t$. The structure of this ring is explained in Subsection \ref{universalsf}.

\begin{claim} \label{lemskewdia0} The natural map $$ R  \tens{\p G}M \to  \D_{\p G} \tens{\p G}M$$ is injective. 
\end{claim}
 
\begin{proof}
Since $v\in H$, $v-1$ is invertible in $\D_{\p[N]}$ (and hence in $R$). Directly from the definition of $M$, we can rewrite, using Lemma \ref{freequo},  
\begin{equation} \label{eq:lemskew} R\tens{\p G} M\cong \frac{\left(R\otimes_{\p[H]} I_H\right)\oplus R}{R\,  \left(v-1-t(u-1), v-1\right)}\cong R\tens{\p G} \p I_H^G \end{equation}

Similarly, $\D_{\p G} \tens{\p G}M\cong   \D_{\p G} \tens{\p G} \p I_H^G$. Observe that 
$$
R\tens{\p G}\p I_H^G\cong  R\tens{\p H} \p I_H \textrm{\ and\ }  \D_{\p G} \tens{\p G} \p I_H^G \cong  \D_{\p G} \tens{\p H} \p I_H.
$$

Since $N\unlhd G$ is normal, $R$ is a right $\D_{\p[N]}$-module. In particular, it is a right $\D_{\p[H]}$-module. So $R$ is a direct  $\D_{\p H}$-summand of  $\D_{\p G} $ and it follows directly that the map 

$$ R\tens{\p H} \p I_H\to  \D_{\p G} \tens{\p H} \p I_H$$ is injective. This proves the claim.
\end{proof}

\begin{claim}
We have the following equality of subsets of $\D_{\p N} \tens{\p N} \p I_H^N$,
\begin{equation} \label{int1}  1\tens{\p N} \p I_H^N\, \, \bigcap\, \,  \D_{\p N}\tens{\p N} (u-1)=1\tens{\p N} \p N (u-1).  \end{equation}
\end{claim} 
 
 \begin{proof}
 By assumption, $\lan u\ran$ is a maximal abelian subgroup of $H$. By Proposition \ref{Atorfree}, the $\p N$-module 
\[M_0=\frac{\p I_H^N}{\p N (u-1)}\]
is $\D_{\p N}$-torsion-free. This implies that the canonical map 
\[M_0\lrar \D_{\F_p N} \tens{\p N} M_0\]
is injective. This gives the claim.\end{proof}

We denote by $H^{t^n}$ the conjugation $t^n Ht^{-n}$ (which may not be the standard way). The $H^{t^n}$ are also subgroups of $N$ having $\lan u^{t^n}\ran $ as maximal abelian subgroup. 

\begin{claim}  For all $n\in \Z$, we have the following equality of subsets of $\D_{\p N}\tens{\p N}  \p I_G$,
\begin{equation} \label{int3} 1\tens{\p N} \p t^n I_H^N\, \, \bigcap \, \, \D_{\p N}\tens{\p N} t^n(u-1)=1\tens{\p N}\p t^n N(u-1).\end{equation}
\end{claim}
\begin{proof}
The same way we had the equality (\ref{int1}), we can derive, for the same reasons, 
\begin{equation} \label{int11}  1\tens{\p N} \p I_{H^{t^{-n}}}^N\, \, \bigcap\, \,  \D_{\p N}\tens{\p N} (u^{t^{-n}}-1)=1\tens{\p N} \p N (u^{t^{-n}}-1).  \end{equation}
Notice that $\p I_G$ has a right $\lan t\ran$-module structure  by multiplication. This induces a right $\lan t\ran$-module structure on $\D_{\p N}\tens{\p N}  \p I_G$. Since $N\n G$, then $t$ normalises $N$. We have the equations of subsets of $\D_{\p N}\tens{\p N}  \p I_G$:
\[\left(1\tens{\p N} \p N (u-1)\right)\, t^m= 1\otimes \p \, t^m N (u^{t^{-m}}-1),\]
and 
\[\left(1\tens{\p N} \p I_H^N \right)\, t^{m} = 1\otimes \p \, t^m  I_{H^{t^{-m}}}.\]
As a consequence, applying the multiplication-by-$t^n$ automorphism of $\D_{\p N}\tens{\p N}  \p I_G$ to the equation  (\ref{int11}),  we get (\ref{int3}). \end{proof}

Notice the following decomposition of $\p N$-modules
\[\p I_H^G=\bigoplus_{n\in \Z}\, \, \p \, t^n I_H^N,\]
which yields to the following decomposition of $\p$-vector spaces 
\[\D_{\p N}\tens{\p N} \p I_H^G\cong \bigoplus_{n\in \Z} \, \D_{\p N} \tens{\p N}\, \p\, t^n  I_H^N.\]

\begin{claim} 
We have the following equation of subsets of $\D_{\p N}\tens{\p N} \p I_H^G$,
\begin{equation} \label{int2} 1\tens{\p N} \p I_H^G\, \, \bigcap \, \, \D_{\p N} \tens{\p N} \p G(v-1-t(u-1))=1\tens{\p N} \p G(v-1-t(u-1)). \end{equation}
\end{claim}
\begin{proof}
Let us take an element $w$ that belongs to the left-hand side. This element will have the form 
\[w=\sum_{k=n_1}^{n_2}c_k\otimes t^k(v-1-t(u-1)),\, \, \mbox{for some $c_k\in \D_{\p N}$,}\]
and will also belong to $1\tens{\p N} \p I_H^G$.  We rewrite 
\[w= c_{n_1}\otimes t^{n_1}(v-1)+\sum_{k=n_1+1}^{n_2} \left(c_k\otimes t^k(v-1) -c_{k-1}\otimes t^{k}(u-1)\right) +c_{n_2}\otimes t^{n_2+1}(u-1).\]
Since $w\in 1\tens{\p N} \p I_H^G $, we can look at the highest power $t^{n_2+1}$ to deduce that 
\[c_{n_2}\otimes t^{n_2+1}(u-1)\in  1\tens{\p N} \p t^{n_2+1} I_H^N\, \, \bigcap \, \, \D_{\p N}\tens{\p N} t^{n_2+1}(u-1).\]
By (\ref{int3}), this implies that 
\[c_{n_2}\otimes t^{n_2+1}(u-1)\in 1\tens{\p N}\p t^{n_2+1} N(u-1),\]
so $c_{n_2}\in \p N$. 
Let $n_1+1\leq k\leq n_2$. Inspecting again the expression of $w$ at the component with power $t^k$, we have that \[c_k\otimes t^k(v-1) -c_{k-1}\otimes t^{k}(u-1)\in  1\tens{\p N} \p t^k I_H^N.\]
If we knew that  $c_k\in \p N$, then it would follow that 
\[ c_{k-1}\otimes t^{k}(u-1)\in 1\tens{\p N} \p t^k I_H^N\, \, \bigcap \D_{\p N}\tens{\p N} t^{k}(u-1).\]
By (\ref{int3}), this means that 
\[c_{k-1}\otimes t^{k}(u-1) \in 1\tens{\p N}\p t^n N(u-1),\]
and this implies that $c_{k-1}\in \F_p N$. 

We have proven that if $c_k\in \F_p N$, for $n_1<k\leq n_2$, then $c_{k-1}\in \p N$. Since we also know that $c_{n_2}\in \p N$, an inductive argument gives that  $c_k\in \p N$ for every $k$, meaning that  \[w\in 1\tens{\p N} \p N[t^{\pm}, \sigma]\,  (v-1-t(u-1))= 1\tens{\p N} \p G(v-1-t(u-1)).\]
This proves that 
\[ 1\tens{\p N} \p I_H^G\, \, \bigcap \, \, \D_{\p N} \tens{\p N} \p G(v-1-t(u-1))\sub 1\tens{\p N} \p G(v-1-t(u-1)),\]
one inclusion of (\ref{int2}). The reverse inclusion is trivial, so equation (\ref{int2}) is proven. \end{proof}
\begin{claim} \label{al}
The natural map $$\al:M\to R\tens{\p G}  M$$
 is injective. 
\end{claim} 
\begin{proof}
As discussed during Subsection \ref{universalsf}, there is a canonical isomorphism of $\p N$-modules
\[ R\cong \D_{\p N}[t^{\pm 1}, \sigma] \cong \D_{\p N} \tens{\p N} \p N[t^{\pm}, \sigma]=  \D_{\p N} \tens{\p N} \p G.  \]
This extends to a canonical isomorphism of $\p N$-modules
\[\psi: \frac{R \tens{\p G} \p I_H^G }{R\tens{\p G} \p G(v-1-t(u-1))}\lrar \frac{\D_{\p N} \tens{\p N} \p I_H^G}{ \D_{\p N}\tens{\p N} \p G(v-1-t(u-1))}. \]
Therefore, there is a commutative triangle of canonical $\p N$-homomorphisms 
\begin{equation*}
\begin{tikzcd}
    \frac{\p I_H^G}{\p G (v-1-t(u-1))} \ar[d, "\al'"] \ar[dr, "\eta"] & \\
     \frac{R \tens{\p G} \p I_H^G }{R\tens{\p G} \p G(v-1-t(u-1))} \ar[r, "\psi"] & \frac{\D_{\p N} \tens{\p N} \p I_H^G}{ \D_{\p N}\tens{\p N} \p G(v-1-t(u-1))},\\
\end{tikzcd}
\end{equation*}
The canonical map $\eta$ is injective due to (\ref{int2}). Since $\psi$ is an isomorphism, this implies that $\al'$ is injective. 

Let $(x, y)\in \p I_H^G\oplus \p G$ be such that $(x,y)+ {\p G\,  (v-1-t(u-1), v-1)}$ belongs to the kernel of $\al$. Then $1\otimes (x, y)=c \otimes (v-1-t(u-1), v-1)$ for some $c\in R$. Notice that 
\[1\otimes x=c\otimes (v-1-t(u-1))\in 1\tens{\p G} \p I_H^G\, \bigcap\,  R\tens{\p G} (v-1-t(u-1)).\]
From the injectivity of $\al'$, this implies that 
\[c\otimes (v-1-t(u-1))\in 1 \tens{\p G} \p G(v-1-t(u-1)),  \]
so $c\in \p G$ and then $(x, y)\in \p G(v-1-t(u-1), v-1).$
Thus $\al$ is injective and  the claim is demonstrated.\end{proof}

From Claims \ref{al} and \ref{lemskewdia0}, we conclude that $M$ is $\D_{\p G}$-torsion-free. \end{proof}

\section{Proofs of the main results}
\label{finalsect}

\subsection{Amalgamated products}\label{paraamal}

%\begin{teo}[Theorem 5.1, \cite{And20}] Let $\FF$ be a finitely generated free pro-$p$ group and let $H\inc \FF$ be a strong embedding of a finitely generated group $H$. Let $A$ be a maximal abelian subgroup of $H$ and let $B$ be an abelian finitely generated subgroup of $\FF$ containing $A$. Then $G=\lan H, B\ran $ is isomorphic to $H\underset{A}{*} B$ and the embedding $G\inc F$ is strong. 
%\end{teo}

Now we are ready to prove Theorem \ref{amalgamparafreeintr}.

\begin{teo} \label{amalgamparafree} Let $\ti{H_1}$ and $\ti{H_2}$ be finitely generated groups. Let $1\neq u_1\in \ti{H_1}$ and let $1\neq u_2\in \ti{H_2}$. Consider the following amalgamated product of cyclic amalgam
\[\ti{G}=\ti{H_1}\underset{u_1=u_2}{*}\ti{H_2}\cong \frac{\ti{H_1}* \ti{H_2}}{\lan \lan u_1u_2^{-1}\ran \ran}.\]
Then $\ti{G}$ is parafree if and only if the three  following conditions hold.
\begin{enumerate}
\item $\ti{H_1}$ and $\ti{H_2}$ are parafree.
    \item The element $u_1u_2^{-1}$ of $\ti{H_1}* \ti{H_2}$ is not a proper power in the abelianization of $\ti{H_1}* \ti{H_2}$.
    \item There is at least one $i\in \{1, 2\}$ such that $u_i$ is not a proper power in $\ti{H_i}$.
\end{enumerate} 

\end{teo}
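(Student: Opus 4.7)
The plan is to apply Proposition \ref{carparafree}, which reduces parafreeness of $\tilde{G}$ to establishing, under hypotheses (1)--(3), that $\tilde{G}_{\hat{p}}$ is free pro-$p$ for every prime $p$ and that $\tilde{G}$ is residually nilpotent. Necessity I dispatch first: (2) follows because Proposition \ref{straugm}(1) identifies $\tilde{G}_{\ab}$ with $(\tilde{H_1} * \tilde{H_2})_{\ab}/\langle\overline{u_1 u_2^{-1}}\rangle$, which must be torsion-free for a parafree group; (3) follows from Proposition \ref{centralizer}, since if $u_i = a_i^{m_i}$ with $m_1, m_2 \ge 2$ then $a_1$ and $a_2$ both centralize the amalgamated element without commuting, violating cyclicity of the centralizer; and (1) follows because each $\tilde{H_i}$ is residually nilpotent as a subgroup of $\tilde{G}$, while its closure in the free pro-$p$ group $\tilde{G}_{\hat{p}}$ is free pro-$p$ by Tate's theorem and is identified with $(\tilde{H_i})_{\hat{p}}$ through the Bass--Serre description of $\tilde{G}_{\hat{p}}$.

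For sufficiency, the first task is to show that $\tilde{G}_{\hat{p}}$ is free pro-$p$ for every prime $p$. By (1) the completions $(\tilde{H_i})_{\hat{p}}$ are free pro-$p$; conditions (2) and (3) then ensure that the amalgamated element is sufficiently primitive that $\tilde{G}_{\hat{p}}$ coincides with the amalgamated pro-$p$ product $(\tilde{H_1})_{\hat{p}} \coprod_{\overline{u_1}=\overline{u_2}} (\tilde{H_2})_{\hat{p}}$, and a standard Mayer--Vietoris computation in pro-$p$ cohomology yields $\cdp(\tilde{G}_{\hat{p}}) \le 1$, so that pro-$p$ group is free pro-$p$ of the predicted rank. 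The second task is residual nilpotence of $\tilde{G}$, which I establish by applying Proposition \ref{pro-$p$ embedding lemma} to the natural map $\phi : \tilde{G} \to \FF := \tilde{G}_{\hat{p}}$ for a single prime $p$. Density is automatic, and $\ker\phi$ is free by Bass--Serre theory: the previous step gives that each $(\tilde{H_i})_{\hat{p}}$ embeds into $\tilde{G}_{\hat{p}}$ as a vertex group, so $\phi$ is injective on each $\tilde{H_i}$ (because $\tilde{H_i}$ is residually-$p$ by Proposition \ref{carparafree}), and hence $\ker\phi$ meets every conjugate of each vertex subgroup trivially and acts freely on the Bass--Serre tree of $\tilde{G}$.

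The remaining and most delicate verification is the torsion-freeness of $\F_p G \otimes_{\F_p \tilde{G}} \F_p I_{\tilde{G}}$ together with the correct $\D_{\F_p G}$-dimension $d(\FF)$. Tensoring the exact sequence of Proposition \ref{straugm}(1) with $\F_p G$ over $\F_p \tilde{G}$, and using the injectivity of $\phi$ on vertex and edge subgroups, this module is presented as $(\F_p I_{H_1}^G \oplus \F_p I_{H_2}^G)/\{(x, -x) : x \in \F_p I_A^G\}$, where $H_i = \phi(\tilde{H_i})$ and $A = \phi(\langle u_1\rangle)$. Assuming without loss of generality via (3) that $u_1$ is not a proper power in $\tilde{H_1}$, this matches the hypothesis of Proposition \ref{amaltor}, which yields both torsion-freeness and the dimension $\dim_{\D_{\F_p H_1}} \F_p I_{H_1} + \dim_{\D_{\F_p H_2}} \F_p I_{H_2} - 1$; this coincides with $d(\FF)$ by Proposition \ref{Bettibound} and the rank formula for the pro-$p$ amalgam. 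I expect the main obstacle to be the verification that $A$ is maximal abelian in $H_1$: primitivity of $u_1$ in $\tilde{H_1}$ must be transferred to $\overline{u_1}$ being primitive in the free pro-$p$ group $(\tilde{H_1})_{\hat{p}}$ and then to maximality of $A$ inside $H_1 \subset \FF$, which in turn depends on the structural description of $\tilde{G}_{\hat{p}}$ as a pro-$p$ amalgam built in the first sufficiency step.
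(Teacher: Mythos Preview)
Your overall architecture matches the paper's---apply Proposition~\ref{pro-$p$ embedding lemma}, feed it the module of Proposition~\ref{amaltor}, and compute the dimension via Proposition~\ref{Bettibound}---but the auxiliary steps you use to set this up contain a genuine gap.

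The gap is your repeated appeal to a ``Bass--Serre description of $\tilde G_{\hat p}$'' and to a ``standard Mayer--Vietoris computation'' for the pro-$p$ amalgam. Amalgamated free pro-$p$ products are not in general \emph{proper}: the canonical maps $(\tilde H_i)_{\hat p}\to\tilde G_{\hat p}$ need not be injective, and without properness there is no action on a pro-$p$ tree, hence no Mayer--Vietoris sequence and no identification of $\overline{H_i}$ with $(\tilde H_i)_{\hat p}$ as a ``vertex group''. You invoke this both in necessity (to show $(\tilde H_i)_{\hat p}$ is free) and in sufficiency (to show $\tilde G_{\hat p}$ is free and that $\phi$ is injective on each $\tilde H_i$), so the circularity cannot be resolved internally. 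The paper bypasses all of this. For freeness of $\tilde G_{\hat p}$ it simply observes that $\tilde G_{\hat p}$ is the quotient of the free pro-$p$ group $(\tilde H_1)_{\hat p}\coprod(\tilde H_2)_{\hat p}$ by the single relator $u_1u_2^{-1}$, which is primitive there by condition~(2), so the quotient is free pro-$p$ of the correct rank---no cohomology needed. For the identification $(\tilde H_i)_{\hat p}\cong\overline{H_i}$ (used both for necessity of~(1) and for injectivity of $\phi|_{\tilde H_i}$ in sufficiency) the paper does an elementary Hopfian rank count: the surjection $\overline{H_1}\coprod\overline{H_2}\to\tilde G_{\hat p}$ has nontrivial kernel because the image of $u_i$ in $\tilde G_{\hat p}$ is nontrivial, hence $d(\overline{H_1})+d(\overline{H_2})-1\ge d(\tilde G_{\hat p})$; comparing with the inequality coming from the presentation forces $d(\overline{H_i})=d((\tilde H_i)_{\hat p})$, and a surjection of free pro-$p$ groups of equal rank is an isomorphism.

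Two smaller remarks. Your worry about $A=\langle\phi(u_1)\rangle$ being maximal abelian in $H_1$ dissolves once the isomorphism $\tilde H_1\cong H_1$ is in hand: $H_1$ is then parafree, $\phi(u_1)$ is not a proper power in it, and Proposition~\ref{centralizer} forces its centralizer to be cyclic, hence equal to $\langle\phi(u_1)\rangle$; no detour through the pro-$p$ completion is needed. And your necessity argument for~(2) via torsion-freeness of $\tilde G_{\ab}$ does not exclude the case where $u_1u_2^{-1}$ is trivial in the abelianization; the paper obtains~(2) as a byproduct of the same rank count that yields~(1).
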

\begin{rem*} The condition (2) can be substituted by the condition
\begin{enumerate}
\item [(2') ]$\abr(\ti{G})=\abr(\ti{H_1})+\abr(\ti{H_2})-1$. \end{enumerate}
The proof of the theorem shows that the condition (3) can be substituted by the condition
\begin{enumerate}
\item [(3') ] All the centralizers of non-trivial elements in $\ti G$ are cyclic.\end{enumerate}

\end{rem*}
\begin{proof} We first prove  that the conditions are necessary. Let us assume that $\ti{G}$ is parafree.

Both $\ti{H_1}$ and $\ti{H_2}$ are subgroups of $\ti{G}$ and hence they are residually nilpotent.
We want  to show that, for all primes $p$, $\ti{H_1}_{\hat p}$ and $\ti{H_2}_{\hat p}$ are free. By Proposition \ref{carparafree}, this would imply that  $\ti{H_1}$ and $\ti{H_2}$ are parafree.

Fix a prime $p$. Recall that the rank of a pro-$p$ group $\GG$ equals the rank of its $p$-abelianisation $\GG/[\GG, \GG]\GG^p$. From this, we observe that 
\begin{equation} \label{eq1} d(\ti{G}_{\hat p})\ge d(\ti{H_1}_{\hat p}) +d(\ti{H_2}_{\hat p})-1.\end{equation}
Consider the closure $\overline{H_i}$ of the image of  $\ti{H_i}$  under the canonical map $\ti{G}\to  \ti{G}_{\hat p}$. Since $\tilde G$ is parafree, $\ti{G}_{\hat p}$ is free pro-$p$, and so both $\overline{H_1}$ and $\overline{H_2}$ are free pro-$p$, because any closed subgroup of a free pro-$p$ group is again free. Note that  $\ti{H_1}$ and $\ti{H_2}$ generate $\ti{G}$. Hence the canonical map  $f:\overline{H_1}\coprod \overline{H_2}\to \ti{G}_{\hat p}$ is onto. 
 
Since $\ti{G}$ is parafree,  the images of $u_1$ and $u_2$ in $\ti{G_{\hat p}}$ are non-trivial. Therefore, $\ker f\ne \{1\}$. 
We also recall that the groups $\overline{H_1}\coprod \overline{H_2}$ and $\ti{G}_{\hat p}$ are free pro-$p$. So, by the Hopfian property, for the surjection $f$ between free pro-$p$ groups to have non-trivial kernel we must have 
\begin{equation} \label{eq2} d(\ti{G}_{\hat p})\le  d\left(\overline{H_1}\coprod \overline{H_2}\right)-1= d(\overline{H_1}) +d(\overline{H_2})-1. \end{equation}

Moreover, since $\overline{H_i}$ is a quotient of $\ti{H_i}_{\hat p}$, $d(\overline{H_i})\leq d(\ti{H_i}_{\hat p})$. The latter observation in combination with (\ref{eq1}) and (\ref{eq2}) yields $d(\overline{H_i})= d(\ti{H_i}_{\hat p})$. The pro-$p$ group $\overline{H_i}$ is free, so the canonical map $\ti{H_i}_{\hat p}\to \overline{H_i}$ is an isomorphism. Hence $\ti{H_i}_{\hat p}$ is free pro-$p$ and the first condition is proved.

The previous argument also shows that, for all primes $p$,
$$d(\ti{G}_{\hat p})= d(\ti{H_1}_{\hat p}) +d(\ti{H_2}_{\hat p})-1, $$ and this implies   the second condition.

Finally, suppose that $u_1=v_1^{n_1}$ in $\ti{H_1}$, and that $u_2=v_2^{n_2}$ in $\ti{H_2}$  with $n_1,n_2\geq 2$.  
 Since $u_i\neq 1$ and $\ti{H_i}$ is torsion-free, then   $\langle v_i\rangle$ are infinite cyclic. Consider the subgroup $A$ generated by $v_1$ and $v_2$. 
 By Proposition \ref{subgraph}, $A$  is isomorphic to $\langle v_1\rangle *_{v_1^{n_1}=v_2^{n_2}} \langle v_2\rangle$.
  This group  is non-abelian and belongs to the centralizer of $u_1=u_2$ in $\ti {G}$. This contradicts Proposition \ref{centralizer}. This shows that the third condition holds.

We now verify that the three given conditions are sufficient. There is a canonical isomorphism 
\begin{equation} \label{eq4} \ti{G}_{\ab}\cong \frac{\ti{H_1}_{\ab}\oplus \ti{H_2}_{\ab}}{u_1-u_2}.\end{equation}
From the fact that $u_1-u_2$ is not a proper power in $\ti{G}_{\ab}$, we see that $\ti{G}_{\ab}$ is torsion-free of rank $\abr(\ti{H_1})+\abr(\ti{H_2})-1$. Therefore for any prime $p$, \[d(\ti{G}_{\hp})=d(\ti{H_1}_{\hp})+d(\ti{H_2}_{\hp})-1.\]

Let us fix an arbitrary prime $p$ from this point on.  
Consider the canonical map $\phi: \ti{G}\lrar \ti{G}_{\hat{p}}$. We denote $G=\phi(\ti{G})$. 

\begin{claim} \label{amal1} The pro-$p$ group $\ti{G}_{\hat{p}}$ is free.
\end{claim}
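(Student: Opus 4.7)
The plan is to realize $\ti{G}_{\hat p}$ as a one-relator pro-$p$ quotient of a free pro-$p$ group, with the relator primitive modulo the Frattini subgroup, and conclude freeness by a standard basis argument. By condition (1) and Proposition \ref{carparafree}, each $\ti{H_i}_{\hat p}$ is a free pro-$p$ group of rank $d_i := d(\ti{H_i}_{\hat p})$, so the pro-$p$ coproduct $\overline F := \ti{H_1}_{\hat p} \coprod \ti{H_2}_{\hat p}$ is itself free pro-$p$ of rank $d := d_1 + d_2$. Since pro-$p$ completion is a left adjoint and hence preserves colimits, applying it to the presentation $\ti G = (\ti{H_1} * \ti{H_2})/\langle\langle u_1 u_2^{-1} \rangle\rangle$ yields a canonical isomorphism $\ti G_{\hat p} \cong \overline F / N$, where $N$ is the closed normal subgroup of $\overline F$ topologically generated by $x := u_1 u_2^{-1}$.

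Next I translate condition (2) into the statement that $x \notin \Phi(\overline F)$. Since each $\ti{H_i}$ is parafree, $\ti{H_i}_{ab}$ is free abelian, so the abelianization of $\ti{H_1} * \ti{H_2}$ is free abelian of rank $d$, and condition (2) says that the image of $x$ there is primitive, in particular nonzero modulo $p$. Hence $x$ has nonzero image in the Frattini quotient $\overline F / \Phi(\overline F) \cong \F_p^d$. The lifting property of finitely generated free pro-$p$ groups --- any preimage of a basis of the Frattini quotient is itself a topological basis --- then allows me to complete $x$ to a basis $\{x, x_2, \ldots, x_d\}$ of $\overline F$. The retraction $\overline F \to \overline F$ sending $x \mapsto 1$ and fixing $x_2, \ldots, x_d$ factors through $\overline F / N$ and admits an inverse induced by the inclusion of the closed subgroup generated by $x_2, \ldots, x_d$, so $\ti G_{\hat p} \cong \overline F / N$ is free pro-$p$ of rank $d - 1$, matching the expected value $d_1 + d_2 - 1$.

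I do not anticipate a serious obstacle here. Conditions (1) and (2) play entirely clean roles --- freeness of the pro-$p$ completions of the factors and primitivity of the amalgamation relator, respectively --- while condition (3) is not needed at this step and will enter only later, in the (harder) residual-nilpotence half of the theorem. The one genuinely delicate point is the identification $\ti G_{\hat p} \cong \overline F / N$ in the very first step, which rests on pro-$p$ completion commuting both with free products and with quotients by normal subgroups, itself an immediate consequence of the adjunction with the forgetful functor from pro-$p$ groups to groups.
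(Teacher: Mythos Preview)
Your proof is correct and follows essentially the same approach as the paper: realize $\ti{G}_{\hat p}$ as the quotient of the free pro-$p$ group $\ti{H_1}_{\hat p}\coprod \ti{H_2}_{\hat p}$ by the closed normal subgroup generated by $u_1u_2^{-1}$, observe via condition~(2) that this element is primitive, and conclude freeness. The paper's argument is identical but terser---it simply asserts primitivity and freeness without spelling out the left-adjoint justification for the presentation of $\ti{G}_{\hat p}$ or the basis-completion argument you provide.
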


\begin{proof} Since $\ti{H_1}$ and $\ti{H_2}$ are parafree, the pro-$p$ groups $\ti{H_1}_{\hp}$ and $\ti{H_2}_{\hp}$ are free. Thus  $\ti{G}_{\hp}$ is a quotient of the free pro-$p$ group
$\ti{H_1}_{\hp}\coprod \ti{H_2}_{\hp}$ by the closed normal subgroup generated by $u_1u_2^{-1}$. Recall that an element $u$ of a pro-$p$ group $\GG$ is primitive if and only if it is primitive in the $p$-abelianisation $\GG/[\GG, \GG]\GG^p$. By assumption, the element $u_1u_2^{-1}\in H_1*H_2$ is primitive in the abelianisation, so it must be  primitive in its pro-$p$ completion $\ti{H_1}_{\hp}\coprod \ti{H_2}_{\hp}$. Hence, $\ti{G}_{\hp}$  is free pro-$p$.
\end{proof}
 
\begin{claim} \label{amal2} The restrictions of $\phi$ to each $\ti{H_i}$ are injective. 
\end{claim}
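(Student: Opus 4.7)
The strategy is to show that the continuous extension $\ti{H_i}_{\hat p}\to \overline{H_i}$ is an isomorphism of free pro-$p$ groups, where $\overline{H_i}$ denotes the closure of $\phi(\ti H_i)$ in $\ti G_{\hat p}$. Both sides are free pro-$p$: the source by parafreeness of $\ti H_i$ (Proposition~\ref{carparafree}), and the target as a closed subgroup of the free pro-$p$ group $\ti G_{\hat p}$ provided by Claim~\ref{amal1}. A continuous surjection between finitely generated free pro-$p$ groups is an isomorphism once the ranks agree, and $\ti H_i$ embeds into $\ti{H_i}_{\hat p}$ by the residual-$p$ property; hence, modulo the rank equality, injectivity of $\phi|_{\ti H_i}$ follows immediately.

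The heart of the argument is to prove that $\phi(u_1)=\phi(u_2)$ is non-trivial in $\ti G_{\hat p}$. Suppose not. By condition~(2), $(u_1,-u_2)$ is primitive in $\ti H_{1,\ab}\oplus \ti H_{2,\ab}\cong \Z^{\abr(\ti H_1)+\abr(\ti H_2)}$, so it remains non-zero modulo $p$ and at least one of $u_1,u_2$ must lie outside $\Phi(\ti{H_i}_{\hat p})$. If \emph{both} $u_i\notin\Phi(\ti{H_i}_{\hat p})$, each $u_i$ is primitive in the free pro-$p$ group $\ti{H_i}_{\hat p}$ and $\overline{H_i}$ is a quotient of the free pro-$p$ group $\ti{H_i}_{\hat p}/\lan\lan u_i\ran\ran$ of rank $d(\ti{H_i}_{\hat p})-1$, so
\[d(\overline{H_1})+d(\overline{H_2})\le d(\ti{H_1}_{\hat p})+d(\ti{H_2}_{\hat p})-2<d(\ti G_{\hat p})=d(\ti{H_1}_{\hat p})+d(\ti{H_2}_{\hat p})-1,\]
which contradicts the existence of the surjection $\overline{H_1}\coprod \overline{H_2}\twoheadrightarrow \ti G_{\hat p}$. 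Otherwise, up to renaming, $u_1\in\Phi(\ti{H_1}_{\hat p})$ while $u_2\notin\Phi(\ti{H_2}_{\hat p})$: one then checks directly that the image of $\ti H_1$ in $(\ti G_{\hat p})_{\ab}/p\cong((\ti{H_1}_{\hat p})_{\ab}/p\oplus (\ti{H_2}_{\hat p})_{\ab}/p)/\F_p(u_1,-u_2)$ spans a subspace of full dimension $d(\ti{H_1}_{\hat p})$, forcing $d(\overline{H_1})\ge d(\ti{H_1}_{\hat p})$ and so $\ti{H_1}_{\hat p}\to \overline{H_1}$ to already be an isomorphism; but under such an isomorphism the non-trivial element $u_1\in \ti{H_1}_{\hat p}$ would map to $\phi(u_1)=1$, a contradiction.

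Once $\phi(u_1)\ne 1$ is secured, $u_1u_2^{-1}$ is a non-trivial length-two alternating word in the free pro-$p$ product $\overline{H_1}\coprod \overline{H_2}$ lying in the kernel of the canonical surjection onto $\ti G_{\hat p}$. By Hopficity of finitely generated free pro-$p$ groups, a surjection with non-trivial kernel must have source of strictly greater rank, so $d(\overline{H_1})+d(\overline{H_2})\ge d(\ti G_{\hat p})+1=d(\ti{H_1}_{\hat p})+d(\ti{H_2}_{\hat p})$; combined with the trivial bound $d(\overline{H_i})\le d(\ti{H_i}_{\hat p})$, this forces $d(\overline{H_i})=d(\ti{H_i}_{\hat p})$ for each $i$, whence each $\ti{H_i}_{\hat p}\to \overline{H_i}$ is an isomorphism of free pro-$p$ groups of equal finite rank. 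The main obstacle is the degenerate subcase $u_1\in\Phi(\ti{H_1}_{\hat p})$ above: there $\ti{H_1}_{\hat p}/\lan\lan u_1\ran\ran$ need not be free pro-$p$ and the direct rank-drop argument is unavailable, so one has to exploit the explicit form of $(\ti G_{\hat p})_{\ab}/p$ to see that the abstract group $\ti H_1$ already maps onto a subspace of full dimension.
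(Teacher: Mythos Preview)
Your argument is correct, but the paper reaches the same conclusion by a shorter bootstrap that avoids your case analysis entirely. The paper first uses only the surjectivity of $f:\overline{H_1}\coprod\overline{H_2}\twoheadrightarrow\ti{G}_{\hat p}$ to obtain $d(\overline{H_1})+d(\overline{H_2})\ge d(\ti{G}_{\hat p})=d(\ti{H_1}_{\hat p})+d(\ti{H_2}_{\hat p})-1$; together with $d(\overline{H_i})\le d(\ti{H_i}_{\hat p})$ this already forces \emph{one} equality, say $d(\overline{H_1})=d(\ti{H_1}_{\hat p})$, so $\ti{H_1}_{\hat p}\to\overline{H_1}$ is an isomorphism and in particular $\phi(u_1)\ne 1$. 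Only then does the paper invoke the non-trivial kernel to upgrade to the strict inequality and obtain the second equality. You instead establish $\phi(u_1)\ne 1$ first by splitting on whether each $u_i$ lies in the Frattini subgroup and computing images in $(\ti{G}_{\hat p})_{\ab}/p$; this is valid and perhaps more explicit, but it duplicates work that the paper's ``one equality for free'' observation makes unnecessary. The upshot is the same: your Case~B computation is essentially reproving, in coordinates, the rank inequality the paper gets immediately from surjectivity of $f$.
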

\begin{proof}
To check this claim, we consider each restriction \[\phi_i=\phi|_{\ti{H_i}}: \ti{H_i}\lrar H_i,\] where $H_i=\phi(\ti{H_i})$. The  subgroups  $\overline{H_1}$ and $\overline{H_2}$ of the free pro-$p$ group $\ti{G}_{\hat{p}}$  are closed. Hence they both are free pro-$p$ groups. 

Since the induced ${\phi_{i}}_{\hat{p}}: \ti{H_i}_{\hat{p}}\lrar \overline{H_i}$ are surjective maps of free pro-$p$ groups, 
\begin{equation} \label{amal2eq1} d(\ti{H_i}_{\hat{p}})\geq d(\overline{H_i}), \, \, \, \mbox{for all $i\in \{1, 2\}.$}\end{equation}
Furthermore, by the universal property of the coproduct, there is a continuous homomorphism
\[f: \overline{H_1}\coprod \overline{H_2}\lrar \ti{G}_{\hat{p}}\]
which sends $\overline{H_i}$ to each corresponding copy in $\ti{G}_{\hat{p}}$. Notice that $\im f$ contains both $H_1$ and $H_2$, so $G\leq \im f$. Since $G$ is dense in $\ti{G}_{\hat{p}}$, $f$ is surjective. By assumption, the element $u_1u_2^{-1}\in H_1*H_2$ is primitive in its abelianisation. So, for some $i$, $u_i$ is non-trivial in the abelianisation of $G$, and hence $u_i$ is non-trivial in $\bar{H_i}$. In particular, $u_1u_2^{-1}$ is a non-trivial element in the kernel of $f$. Arguing exactly as we did in deriving equations (\ref{eq1}) and (\ref{eq2}), we conclude that the induced maps ${\phi_{i}}_{\hat{p}}: (\ti{H_i})_{\hp}\lrar {H_i}_{\hp}$   are isomorphisms. This implies that the surjections $\phi_i$   are injective because the groups $\ti{H_i}$ are residually-$p$.
\end{proof}
\begin{claim} \label{amal3} The map $\phi: \ti{G}\lrar \ti{G}_{\hat{p}}$ is injective. 
\end{claim}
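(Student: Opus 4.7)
The plan is to verify the three hypotheses of Proposition \ref{pro-$p$ embedding lemma} for the map $\phi: \ti{G} \to \FF := \ti{G}_{\hat p}$. The density of $G := \phi(\ti{G})$ in $\FF$ is immediate from the definition of pro-$p$ completion. For the freeness of $\ker \phi$: by Claim \ref{amal2}, $\phi$ embeds every vertex subgroup $\ti{H_i}$, so $\ker \phi$ intersects every $\ti{G}$-conjugate of $\ti{H_i}$ trivially. It therefore acts freely on the Bass-Serre tree of $\ti{G} = \ti{H_1} *_{u_1=u_2} \ti{H_2}$ and is consequently free.

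The technical heart of the argument is the second hypothesis of Proposition \ref{pro-$p$ embedding lemma}. I apply $\F_p G \otimes_{\F_p \ti{G}} -$ to the exact sequence of $\F_p \ti{G}$-modules
$$0 \to \F_p I_{\langle u_1 \rangle}^{\ti{G}} \to \F_p I_{\ti{H_1}}^{\ti{G}} \oplus \F_p I_{\ti{H_2}}^{\ti{G}} \to \F_p I_{\ti{G}} \to 0$$
supplied by Proposition \ref{straugm}(1). Since the restrictions $\phi|_{\ti{H_i}}$ and $\phi|_{\langle u_1 \rangle}$ are all injective, the tensored modules are identified as $\F_p G \otimes_{\F_p \ti{G}} \F_p I_{\ti{H_i}}^{\ti{G}} \cong \F_p I_{H_i}^G$ and $\F_p G \otimes_{\F_p \ti{G}} \F_p I_{\langle u_1 \rangle}^{\ti{G}} \cong \F_p I_A^G$, where $H_i := \phi(\ti{H_i})$ and $A := \langle \phi(u_1) \rangle$. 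The connecting map becomes $a \mapsto (a,-a)$, which is visibly injective, producing the short exact sequence
$$0 \to \F_p I_A^G \to \F_p I_{H_1}^G \oplus \F_p I_{H_2}^G \to \F_p G \otimes_{\F_p \ti{G}} \F_p I_{\ti{G}} \to 0.$$

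To identify the cokernel with the torsion-free module of Proposition \ref{amaltor}, I must know that $A$ is maximal abelian in $H_1$. By condition (3) of the theorem, after possibly interchanging the two vertex groups, I may assume $u_1$ is not a proper power in $\ti{H_1}$. Parafreeness of $\ti{H_1}$ together with Proposition \ref{centralizer} forces $C_{\ti{H_1}}(u_1) = \langle u_1 \rangle$, and transporting through the isomorphism $\phi_1$ from Claim \ref{amal2} gives $C_{H_1}(\phi(u_1)) = A$, so $A$ is maximal abelian in $H_1$. Proposition \ref{amaltor} then yields both the $\D_{\F_p G}$-torsion-freeness and the $\D_{\F_p G}$-dimension $\dim_{\D_{\F_p H_1}} \F_p I_{H_1} + \dim_{\D_{\F_p H_2}} \F_p I_{H_2} - 1$; combining Proposition \ref{Bettibound} for the parafree groups $H_i \cong \ti{H_i}$ with the identity $d(\FF) = d(\ti{H_1}_{\hat p}) + d(\ti{H_2}_{\hat p}) - 1$ established in the proof of Claim \ref{amal2} shows this dimension is $d(\FF)$, and Proposition \ref{pro-$p$ embedding lemma} concludes $\ker \phi = 1$. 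The main subtlety is that Proposition \ref{amaltor} is stated with $A = H_1 \cap H_2$; one either inspects its proof to see that only $A \le H_1 \cap H_2$ and $A$ maximal abelian in $H_1$ are needed, or separately establishes $H_1 \cap H_2 = A$ by noting that $\overline{H_1} \cap \overline{H_2}$ is procyclic inside $\FF$ and hence contained in $C_{H_1}(\phi(u_1)) = A$.
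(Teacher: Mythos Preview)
Your proof is correct and takes essentially the same approach as the paper's. You are in fact more careful: you correctly flag that Proposition \ref{amaltor} is stated with the hypothesis $A = H_1 \cap H_2$, which the paper's own proof of Claim \ref{amal3} never verifies, and your observation that the proof of Proposition \ref{amaltor} only needs $A \le H_1 \cap H_2$ together with $A$ maximal abelian in $H_1$ is the right fix (your alternative resolution via procyclicity of $\overline{H_1}\cap\overline{H_2}$ is less transparent and not needed).
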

\begin{proof}
We want to apply Proposition  \ref{pro-$p$ embedding lemma} to the map $\phi: \ti{G}\lrar G\sub \ti{G}_{\hp}$. 

Notice first that, by Claim \ref{amal2},
the kernel of $\phi$ intersects trivially the subgroups $\ti{H_1}$ and $\ti{H_2}$. Therefore,  by the Bass-Serre theory, the kernel acts freely on a tree, and so it is free. 

We consider the corresponding 
$\p G$-module 
\[M=\p G\tens{\p \ti{G}} \p I_{\ti{G}}.\]

Consider $A=\lan \phi(u_1)\ran$ and  \[J=\{(x, -x): x\in \p I_A^{G}\}\leq \p I_{H_1}^G\oplus \p I_{H_2}^G.\]
Then, by Proposition \ref{straugm}(1),

\begin{equation} \label{eq3} M\cong \frac{\p I_{H_1}^G\oplus \p I_{H_2}^G}{J}.\end{equation}
Note that we did not use the fact that $G$ is an amalgamated product of $H_1$ and $H_2$ along the cyclic subgroup $\lan \phi(u_1)\ran$ (which, in fact, we do not know yet to be true). We derived the isomorphism of (\ref{eq3}) applying Proposition \ref{straugm}(1) to the splitting of the group $W=\ti{G}$ to obtain the corresponding expression for its augmentation ideal $\F_p I_{\ti{G}}$. The expression for $M$ is now derived from its very definition $M=\p G\tens{\p \ti{G}} \p I_{\ti{G}}.$

Without loss of generality, we suppose that $u_1$ is not a proper power in $\ti{H_1}$. 
By Claim \ref{amal2}, $ \ti{H_i}\cong  H_i$, so $H_i$ ($i=1,2$) are parafree and  $\phi(u_1)$ is not a proper power in ${H_1}$. Hence $A=\lan u_1\ran $ is a maximal abelian subgroup of $H_1$.

By Proposition \ref{amaltor}, $M$ is $\D_{\p G}$-torsion-free with dimension 
\[ \dim_{\D_{\p G} }M =  \dim_{\D_{\p H_1}} \p I_{H_1}+\dim_{\D_{\p H_2}}\p I_{H_2}-1.\]
In addition, by Proposition \ref{Bettibound}, 
 \[  \dim_{\D_{\p H_1}} \p I_{H_1}+\dim_{\D_{\p H_2}}\p I_{H_2}-1=d({H_1}_{\hat{p}})+d({H_2}_{\hat{p}})-1 = d(\ti{G}_{\hat{p}}).\]
It follows that $\dim_{\D_{\p G}} M=d(\ti{G}_{\hp})$. We can apply Proposition \ref{pro-$p$ embedding lemma} to conclude that $\phi: \ti{G}\lrar G$ is injective. \end{proof}

The last claim implies that $\ti{G}$ is residually nilpotent. Moreover, we already know that each $\ti{G}_{\hat{p}}$ is free, so  $\ti{G}$ is parafree by Proposition \ref{carparafree}. 
\end{proof}

\subsection{HNN extensions}\label{parahnn}

%The following result  describes which cyclic HNN extensions of parafree groups are parafree. 
Now we   prove Theorem \ref{hnnparafreeintr}.

\begin{teo} \label{hnnparafree} Let $\ti{H}$ be a finitely generated  group. Let $u, v\in \ti{H}\setminus \{1\}$. Consider the following cyclic HNN extension of $\ti{H}$ 
\[\ti{G}= \frac{\ti{H}* \lan t\ran}{\lan \lan tut^{-1}v^{-1}\ran \ran}. \]
Then $\ti{G}$ is  parafree if and only if the four following conditions hold.
\begin{enumerate}
\item The group $\ti{H}$  is parafree.
    \item The element $uv^{-1}$ is not a proper power in $\ti{H}_{\ab}$.
    \item At least one of $u$ or $v$ is not a proper power in $\ti{H}$.
    \item  The image of the element $u$ is non-trivial in some finite nilpotent quotient of $\ti{G}$.
\end{enumerate}

\end{teo}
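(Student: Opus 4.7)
The plan is to mirror the proof of Theorem \ref{amalgamparafree} almost step-by-step, with two HNN-specific adjustments: one must choose a prime $p$ at which the image of $u$ in $\ti{G}_{\hp}$ does not collapse (supplied by condition (4)), and one must invoke Proposition \ref{hnntor} in place of Proposition \ref{amaltor} for the torsion-freeness of the augmentation module.

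For necessity, assume that $\ti{G}$ is parafree. Condition (1) is obtained exactly as in the amalgamated case: the closure $\overline{H}$ of the image of $\ti{H}$ in the free pro-$p$ group $\ti{G}_{\hp}$ is free pro-$p$, and comparing the rank of $\ti{G}_{\hp}$ as a quotient of $\overline{H}\coprod\langle t\rangle_{\hp}$ with the lower bound coming from $\ti{G}_{\ab}$ forces $\ti{H}_{\hp}\to \overline{H}$ to be an isomorphism. Condition (2) follows from the isomorphism $\ti{G}_{\ab}\cong (\ti{H}_{\ab}\oplus\Z)/\langle u-v\rangle$, which must be torsion-free. Condition (3) follows from Proposition \ref{centralizer}: if both $u=u_1^{n_1}$ and $v=v_1^{n_2}$ with $n_1,n_2\ge 2$, then Proposition \ref{subgraph} identifies $\langle u_1,v_1,t\rangle$ with an HNN extension whose base group $\langle u_1,v_1\rangle$ is noncyclic and which centralizes $u$, contradicting the cyclicity of centralizers. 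Condition (4) is immediate: by Proposition \ref{carparafree}, $\ti{G}$ is residually-$p$ for every prime $p$, hence residually finite nilpotent, so the nontrivial element $u$ must survive in some finite nilpotent quotient.

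For sufficiency, the first step is to verify that $\ti{G}_{\hp}$ is free pro-$p$ for every prime $p$. Using condition (1), $\ti{G}_{\hp}$ is the quotient of the free pro-$p$ group $\ti{H}_{\hp}\coprod \langle t\rangle_{\hp}$ by the normal closure of $tut^{-1}v^{-1}$. Condition (2) ensures that $u-v$ is nonzero in $\ti{H}_{\ab}\otimes \F_p$, so the relator is primitive in the $\F_p$-abelianization of the ambient free pro-$p$ group, and the quotient is therefore free pro-$p$ of rank $d(\ti{H}_{\hp})$.

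The main step is to pick, using condition (4), a prime $p$ for which the image of $u$ in $\ti{G}_{\hp}$ is nontrivial, and to prove that the canonical map $\phi:\ti{G}\to \ti{G}_{\hp}$ is injective for this $p$. Arguing as in Claim \ref{amal2}, one shows that $\phi|_{\ti{H}}$ is an isomorphism onto $H=\phi(\ti{H})$; condition (3) then guarantees that $u$ or $v$ generates a maximal abelian subgroup of $H$. We then apply Proposition \ref{pro-$p$ embedding lemma}: density is automatic, the freeness of $\ker \phi$ follows from Bass--Serre theory (as $\ker \phi\cap \ti{H}=1$), and Proposition \ref{straugm}(2) presents $M=\F_p G\tens{\F_p \ti{G}}\F_p I_{\ti{G}}$ in exactly the form to which Proposition \ref{hnntor} applies, supplying its $\D_{\F_p G}$-torsion-freeness and, together with Proposition \ref{Bettibound}, matching $\dim_{\D_{\F_p G}} M$ to $d(\ti{G}_{\hp})$. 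The principal obstacle is this last verification, namely checking that the hypotheses of Proposition \ref{hnntor}---in particular that the image of $u$ generates a maximal abelian subgroup of the image of $\ti H$ and that $v$ lies in that image---are preserved under the passage from $\ti{G}$ to $\ti{G}_{\hp}$. Once $\phi$ is injective, $\ti{G}$ is residually nilpotent and, combined with Step~1, Proposition \ref{carparafree} concludes that $\ti{G}$ is parafree.
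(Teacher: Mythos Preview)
Your proposal is correct and follows essentially the same route as the paper's own proof, including the choice of prime via condition~(4), the injectivity of $\phi|_{\ti H}$ by a rank count, and the application of Proposition~\ref{hnntor} to the module coming from Proposition~\ref{straugm}(2). One small slip in the necessity of~(3): it is not the base group $\langle u_1,v_1\rangle$ that centralizes $u$, but rather the pair $u_1$ and $t^{-1}v_1t$ (since $u=u_1^{n_1}=(t^{-1}v_1t)^{n_2}$), and the paper argues that these two elements do not commute in the HNN extension, giving a nonabelian centralizer.
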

\begin{rem*} The condition (2) can be substituted by the condition
\begin{enumerate}
\item [(2') ]$\abr(\ti{G})=\abr(\ti{H})$. \end{enumerate}
The proof of the theorem shows that the condition (3) can be substituted by the condition
\begin{enumerate}
\item [(3') ] All the centralizers of non-trivial elements in $\ti G$ are cyclic.\end{enumerate}

\end{rem*}

\begin{proof} First let us show that the given conditions are necessary. Assume that $\ti{G}$ is parafree.

The group $\ti{H}$ is a subgroup  of $\ti{G}$ and hence it is residually nilpotent.
We want  to show that, for all primes $p$, $\ti{H}_{\hat p}$ is  free. By Proposition \ref{carparafree}, this would imply that  $\ti{H}$ is  parafree.

Fix a prime $p$. Observe that 
$$d(\ti{G}_{\hat p})\ge d(\ti{H}_{\hat p}).$$
Consider the closure $\overline{H}$ of the image of  $\ti{H}$  under the canonical map $\ti{G}\to  \ti{G}_{\hat p}$. Since $\tilde G$ is parafree, $\ti{G}_{\hat p}$ is free pro-$p$, and so $\overline{H}$  is also  free pro-$p$. Note that  $\ti{H}$ and $t$ generate $\ti{G}$. Hence,  the canonical map  $f:\overline{H}\coprod \Z_p\to \ti{G}_{\hat p}$, which sends the generator 1 of  $\Z_p$ to $t$, is onto. 

Since $\ti{G}$ is parafree,  the images of $u$ and $v$ in $\ti{G}_{\hat p}$ are non-trivial. Therefore, $\ker f\ne \{1\}$.
Thus, since the groups $\overline{H}\coprod \Z_p$ and $\ti{G}_{\hat p}$ are free pro-$p$,
$$d(\ti{G}_{\hat p})\le d(\overline{H}).$$

Since $\overline{H}$ is a quotient of $\ti{H}_{\hat p}$, $d(\overline{H})=d(\ti{H}_{\hat p})$, and since, $\overline{H}$ is free pro-$p$, the canonical map $\ti{H}_{\hat p}\to \overline{H}$ is an isomorphism. Hence, $\ti{H}_{\hat p}$ is free pro-$p$, and the first condition is proved.

The previous argument also shows that for all prime $p$,
$$d(\ti{G}_{\hat p})= d(\ti{H}_{\hat p}).$$
This implies   the second condition.

 Now suppose that $u=w^{n_1}$  and  $v=w_2^{n_2}$ in $\ti{H}$ with $n_1,n_2\geq 2$.   Since $u, v\neq 1$ and $\ti{H}$ is torsion-free, then   $\langle w_i\rangle$ are infinite cyclic. 
 Consider the subgroup $A$ of $\ti{G}$ generated by $w_1$,  $w_2$ and $t$.
 By Proposition \ref{subgraph}, $A$  is isomorphic to the HNN extension $A^\prime=\langle w_1, w_2, t| tw_1^{n_1}t^{-1}=w_2^{n_2}\rangle$.  The centralizer of $w_1^{n_1}=(t^{-1}w_2t)^{n_2}$ in $A^\prime$ contains $w_1$ and $t^{-1}w_2t$, so it is not abelian.  Thus the centralizer of $u$ in $\ti {G}$ is not abelian.
  This contradicts Proposition \ref{centralizer}. This shows that the third condition holds.
  The forth condition holds because $\ti{G}$ is parafree.

Now we are going to verify that these four conditions are sufficient for $\ti{G}$ to be parafree. 
First of all, it is clear that 
\[\ti{G}_{\ab}\cong \frac{\ti{H}_{\ab}}{\lan u-v\ran}\oplus \lan t\ran.\]
From the fact that $u-v$ is not a proper power in $\ti{H}_{\ab}$, we see that $\ti{G}_{\ab}$ is torsion-free of the same rank as $\ti{H}_{\ab}$. 
Therefore,   $d(\ti{G}_{\hp})=d(\ti{H}_{\hp})$.
In addition, we also see that $t$ is primitive in $\ti{G}_{\ab}$. Let us fix a  prime $p$ such that  the image of the element $u$ is non-trivial in some finite $p$-quotient of $\ti{G}$.
Consider the canonical map $\phi: \ti{G}\lrar \ti{G}_{\hat{p}}$.

\begin{claim} \label{hnn0} The pro-$p$ group $\ti{G}_{\hp}$ is free and the element $\phi(t)$ is primitive in $\ti{G}_{\hp}$.
\end{claim}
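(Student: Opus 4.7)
The plan is to realise $\ti{G}_{\hp}$ as a one-relator quotient of a free pro-$p$ group in which the single relator can be chosen as one element of a free pro-$p$ basis. Set $\FF := \ti{H}_{\hp}\coprod \Z_p$ and $w := tut^{-1}v^{-1}\in \FF$. Applying the universal property of pro-$p$ completion to the defining presentation $\ti{G}=(\ti{H}*\lan t\ran)/\lan\lan w\ran\ran$ and using that pro-$p$ completion of a free product of finitely generated groups is the pro-$p$ coproduct of their pro-$p$ completions, one obtains a natural isomorphism $\ti{G}_{\hp}\cong \FF/\lan\lan w\ran\ran$. Because $\ti{H}$ is parafree by condition $(1)$, Proposition \ref{carparafree} gives that $\ti{H}_{\hp}$ is free pro-$p$, so $\FF$ is itself free pro-$p$ of rank $d(\ti{H}_{\hp})+1$.

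Next I would check that the image of $w$ in the Frattini quotient $\FF/\Phi(\FF)\cong \FF_{\ab}\otimes\F_p$ is non-zero. Under the identification $\FF_{\ab}\cong \ti{H}_{\ab,\hp}\oplus \Z_p$, the image of $w$ reads $(uv^{-1},0)$. Since $\ti{H}$ is parafree, $\ti{H}_{\ab}$ is finitely generated torsion-free abelian, and condition $(2)$ says that $uv^{-1}$ is not a proper power there; equivalently, $uv^{-1}$ is primitive in $\ti{H}_{\ab}$, and so in particular its reduction modulo $p$ is non-zero. Hence $w\notin \Phi(\FF)$.

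By the Burnside basis theorem, any element of a finitely generated free pro-$p$ group which is non-trivial modulo the Frattini subgroup can be completed to a topological free generating set. Hence $w$ extends to a basis of $\FF$, and quotienting a free pro-$p$ group by the normal closure of a single basis element yields a free pro-$p$ group of rank one less. Therefore $\FF/\lan\lan w\ran\ran\cong \ti{G}_{\hp}$ is free pro-$p$ of rank $d(\FF)-1=d(\ti{H}_{\hp})=d(\ti{G}_{\hp})$, which proves the first half of the claim and is consistent with the rank computed just above. For the primitivity of $\phi(t)$, we already observed that $\ti{G}_{\ab}\cong \ti{H}_{\ab}/\lan u-v\ran \oplus \lan t\ran$ is torsion-free and that $t$ projects to a primitive element of it; consequently the image of $\phi(t)$ in $\ti{G}_{\hp}/\Phi(\ti{G}_{\hp})\cong \ti{G}_{\ab}\otimes \F_p$ is non-zero, and one more application of the Burnside basis theorem identifies $\phi(t)$ with an element of a free basis of $\ti{G}_{\hp}$.

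The only real technicality is the identification $\ti{G}_{\hp}\cong \FF/\lan\lan w\ran\ran$, but this is a purely formal consequence of the universal property of pro-$p$ completion combined with the compatibility of pro-$p$ completion with free products; no condition from the theorem is needed for it. Conditions $(1)$ and $(2)$ supply the two inputs that drive the argument (freeness of $\FF$ and non-triviality of $w$ modulo the Frattini subgroup), while conditions $(3)$ and $(4)$ play no role in this claim and will only be used later, in the steps addressing residual nilpotence and the torsion-free module machinery.
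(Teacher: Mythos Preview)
Your proof is correct and follows the same strategy as the paper: write $\ti{G}_{\hp}$ as $(\ti{H}_{\hp}\coprod\Z_p)/\lan\lan w\ran\ran$, observe that $\ti{H}_{\hp}$ (hence the coproduct) is free pro-$p$ by condition~(1), and that $w$ is primitive so the quotient is free pro-$p$. You supply more detail than the paper does---in particular you justify primitivity of $w$ via condition~(2) and the Frattini quotient, and you explicitly argue the primitivity of $\phi(t)$, which the paper states in the claim but does not prove in the body of the proof (relying instead on the observation made just before the claim that $t$ is primitive in $\ti{G}_{\ab}$).
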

\begin{proof} The pro-$p$ group $\ti{G}_{\hp}$  is the quotient of $ \ti{H}_{\hat{p}}\coprod \Z_p $ by the closed subgroup generated by $tut^{-1}v^{-1}$. Since $ \ti{H}$ is parafree,   $ \ti{H}_{\hat{p}}$, is free pro-$p$. Thus $ \ti{H}_{\hat{p}}\coprod \Z_p $ is free pro-$p$. Observe also that the element $tut^{-1}v^{-1}$ is primitive in $ \ti{H}_{\hat{p}}\coprod \Z_p $ (since it is primitive in its $p$-abelianisation). Hence, $\ti{G}_{\hp}$  is free pro-$p$.
 \end{proof}

We name $H=\phi(\ti{H})$. 

\begin{claim}\label{hnn1} The restriction of $\phi$ to $\ti{H}$ is injective. 
\end{claim}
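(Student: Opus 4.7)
The plan is to mimic the strategy of Claim \ref{amal2}: I will show that the induced map on pro-$p$ completions $(\phi|_{\ti{H}})_{\hp}\colon \ti{H}_{\hp}\twoheadrightarrow \overline{H}$ is an isomorphism, where $\overline{H}$ denotes the closure of $\phi(\ti{H})$ inside $\ti{G}_{\hp}$. Once this is done, the restriction $\phi|_{\ti{H}}$ factors as the embedding $\ti{H}\hookrightarrow \ti{H}_{\hp}$ (which exists because $\ti{H}$ is parafree, hence residually-$p$ by Proposition \ref{carparafree}) followed by the isomorphism $\ti{H}_{\hp}\cong \overline{H}\sub \ti{G}_{\hp}$, and is therefore injective.

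By Claim \ref{hnn0}, $\ti{G}_{\hp}$ is a free pro-$p$ group, so its closed subgroup $\overline{H}$ is also free pro-$p$. Since $\ti{H}$ and $t$ generate $\ti{G}$, the universal property of the pro-$p$ coproduct yields a surjection
\[f\colon \overline{H}\coprod \Z_p\twoheadrightarrow \ti{G}_{\hp}\]
sending the topological generator $1$ of $\Z_p$ to $\phi(t)$. Our choice of the prime $p$, dictated by hypothesis (4) and the fact that a finite nilpotent quotient splits as a product of its Sylow factors, guarantees that $\phi(u)\neq 1$ in $\ti{G}_{\hp}$, and consequently $\phi(v)=\phi(t)\phi(u)\phi(t)^{-1}\neq 1$. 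Inside the free pro-$p$ product $\overline{H}\coprod \Z_p$ the element $tut^{-1}v^{-1}$ is then a non-trivial reduced word that becomes trivial in $\ti{G}_{\hp}$, so $\ker f\neq \{1\}$.

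The rank inequalities now close the argument. On one side, $\overline{H}$ is a quotient of $\ti{H}_{\hp}$, so $d(\overline{H})\leq d(\ti{H}_{\hp})=d(\ti{G}_{\hp})$, where the last equality was already established just before Claim \ref{hnn0} as a consequence of condition (2). On the other side, since $f$ is a surjection between free pro-$p$ groups with non-trivial kernel,
\[d(\ti{G}_{\hp})\leq d(\overline{H}\coprod \Z_p)-1=d(\overline{H}).\]
These combine to $d(\overline{H})=d(\ti{H}_{\hp})$, and because both groups are free pro-$p$, the surjection $\ti{H}_{\hp}\twoheadrightarrow \overline{H}$ must be an isomorphism. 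The only delicate point is the correct use of hypothesis (4): without knowing that $\phi(u)\neq 1$, the element $tut^{-1}v^{-1}$ might already represent the identity in $\overline{H}\coprod \Z_p$, and the rank bound above would collapse. All other steps are direct analogues of what was done in the amalgamated case.
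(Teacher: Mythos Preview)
Your proof is correct and follows essentially the same route as the paper's: both show that the induced map $\ti{H}_{\hp}\to\overline{H}$ is an isomorphism by a rank count via the surjection $f\colon \overline{H}\coprod\Z_p\twoheadrightarrow \ti{G}_{\hp}$, using hypothesis~(4) to ensure $\phi(u)\neq 1$ and hence $\ker f\neq\{1\}$, and then conclude injectivity of $\phi|_{\ti{H}}$ from the fact that the parafree group $\ti{H}$ is residually-$p$. The only point the paper makes slightly more explicit is that the element $\phi(t)\phi(u)\phi(t)^{-1}\phi(v)^{-1}$ is non-trivial in $\overline{H}\coprod\Z_p$ because the abstract free product $\overline{H}*\Z_p$ embeds in the pro-$p$ coproduct; your ``reduced word'' remark amounts to the same thing.
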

\begin{proof}
To verify this, consider the closed subgroup $\overline{H}\leq \ti{G}_{\hat{p}}$. Since $\ti{G}_{\hat{p}}$ is free, the pro-$p$ group $\overline{H}$ must be free. We notice that the epimorphism $\phi: \ti{H}\lrar H$ induces a continuous epimorphism $\phi_{\hat{p}}: \ti{H}_{\hat{p}}\lrar \overline{H}$. 
In particular,  
\begin{equation} \label{n>d} d(\ti{H}_{\hp})\geq d(\overline{H}).\end{equation} 
Furthermore, by the universal property of the coproduct, there is a continuous homomorphism
\[f: \overline{H}\coprod \Z_p \lrar \ti{G}_{\hat{p}},\]
which sends $\overline{H}$ to $\ti{G}_{\hat{p}}$, by inclusion; and   $\Z_p$ to the cyclic pro-$p$ group generated by $\phi(t)$. Since the image of $f$ contains both $H$ and $\phi(t)$, it follows that $\im f$ contains $\phi(\ti{G})$, so $f$ must be a surjective. In addition, it has a nontrivial kernel; since $\phi(t)\phi(u)\phi(t)^{-1}\phi(v)^{-1}=1$ and $\phi(u)\neq 1$, by assumption. Here we have used   that the canonical map $\iota: \overline{H}* \Z_p\lrar  \overline{H}\coprod \Z_p$ is injective.

This verifies that $f$ is a surjective and non-injective continuous homomorphism  between free pro-$p$ groups. Hence,   
\[d(\overline{H})+1=d(\overline{H})+d(\Z_p)=d\left( \overline{H}\coprod \Z_p \right)> d(\ti{G}_{\hat{p}}). \]
This, in addition to (\ref{n>d}), implies that $d(\ti{H}_{\hp})=d(\overline{H})$. So  $\phi_{\hat{p}}: \ti{H}_{\hat{p}}\rar \overline{H}$ is a continuous epimorphism between free pro-$p$ groups of the same rank, which implies that it is  an isomorphism.  Since $\ti{H}$ is parafree, it is residually-$p$. Hence $\phi$ is also injective.
  \end{proof}
 
\begin{claim}\label{hnn3} The map $\phi: \ti{G}\lrar \ti{G}_{\hp}$ is injective.
\end{claim}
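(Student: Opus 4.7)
The plan is to apply Proposition \ref{pro-$p$ embedding lemma} to $\phi: \tilde G \to G \subseteq \tilde G_{\hp}$ (with $G = \phi(\tilde G)$), so the task reduces to verifying that $\ker \phi$ is free and that the $\p G$-module $M = \p G \otimes_{\p \tilde G} \p I_{\tilde G}$ is $\D_{\p G}$-torsion-free of dimension $d(\tilde G_{\hp})$; the density of $G$ in $\tilde G_{\hp}$ is automatic.

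The freeness of $\ker \phi$ would be established via the Bass--Serre tree of the HNN extension $\tilde G = \tilde H *_\alpha$, whose vertex stabilizers are conjugates of $\tilde H$ and whose edge stabilizers are conjugates of $\langle u\rangle \leq \tilde H$. Since $\phi|_{\tilde H}$ is injective by Claim \ref{hnn1}, $\ker \phi$ intersects every such stabilizer trivially, acts freely on the tree, and is therefore free.

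The module analysis is the substantial step. By Proposition \ref{straugm}(2), after tensoring with $\p G$ and identifying $u, v$ with their images in $G$ via Claim \ref{hnn1}, one obtains
\[M \cong \frac{\p I_H^G \oplus \p G}{\p G(v - 1 - t(u - 1),\, v - 1)},\]
which is exactly the module treated in Proposition \ref{hnntor}. To invoke that proposition inside $\FF = \tilde G_{\hp}$, I would use the HNN retraction $\tilde G \to \mathbb{Z}$ sending $t \mapsto 1$ and killing $\tilde H$, extend it continuously to $\tilde G_{\hp} \to \mathbb{Z}_p$, and set $N = G \cap \ker(\tilde G_{\hp} \to \mathbb{Z}_p)$; this yields a splitting $G = N \rtimes \langle t\rangle$ with $H \leq N$. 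Condition (3), combined with Proposition \ref{centralizer} applied to $H \cong \tilde H$ (so that maximal cyclic equals maximal abelian in $H$), guarantees---after swapping $u$ and $v$ if necessary---that $\langle u\rangle$ is maximal abelian in $H$; and $v = tut^{-1} \in H$ by construction. Proposition \ref{hnntor} then delivers the $\D_{\p G}$-torsion-freeness of $M$.

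For the dimension, $\phi(v) \neq 1$ makes $v - 1$ invertible in $\D_{\p G}$, so Lemma \ref{freequo} gives
\[\D_{\p G} \otimes_{\p G} M \cong \D_{\p G} \otimes_{\p G} \p I_H^G,\]
whence $\dim_{\D_{\p G}} M = \dim_{\D_{\p H}} \p I_H = d(\tilde H_{\hp}) = d(\tilde G_{\hp})$ by Proposition \ref{Bettibound} (applied to the parafree group $H$) and the abelianization computation performed before Claim \ref{hnn0}. I expect the main obstacle to be the verification of the hypotheses of Proposition \ref{hnntor}, namely exhibiting the semidirect product splitting $G = N \rtimes \langle t\rangle$ inside the free pro-$p$ group $\tilde G_{\hp}$ and confirming that $\langle u\rangle$ survives as a maximal abelian subgroup in the image $H$; once these are in place, all three conditions of Proposition \ref{pro-$p$ embedding lemma} are satisfied and $\phi$ is injective.
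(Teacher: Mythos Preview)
Your proposal is correct and follows essentially the same route as the paper: both verify the hypotheses of Proposition~\ref{pro-$p$ embedding lemma} by using Claim~\ref{hnn1} and Bass--Serre theory for the freeness of $\ker\phi$, invoking Proposition~\ref{straugm}(2) to identify $M$, constructing the retraction $\tilde G_{\hp}\to\Z_p$ to obtain the splitting $G=N\rtimes\langle t\rangle$ needed for Proposition~\ref{hnntor}, using condition~(3) with Proposition~\ref{centralizer} for the maximal-abelian hypothesis, and finishing the dimension count via Lemma~\ref{freequo} and Proposition~\ref{Bettibound}. The only cosmetic difference is that you spell out the semidirect decomposition and the swap $u\leftrightarrow v$, $t\leftrightarrow t^{-1}$ more explicitly than the paper does.
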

\begin{proof}
We denote $G=\phi(\ti{G})$. We want to apply Proposition \ref{pro-$p$ embedding lemma} to the map $\phi: \ti{G}\lrar G\sub \ti{G}_{\hp}$. We already know, from Claim \ref{hnn0}, that  $\ti{G}_{\hp}$ is free. Notice also  that, by Claim \ref {hnn1},
the kernel of $\phi$ intersects trivially the subgroup $\ti{H}$ of $\ti{G}$. A standard  Bass-Serre theoretic argument proves that the kernel is free.

We define a continuous homomorphism  $q: \ti{G}_{\hat{p}}\lrar \Z_p$ such that $q(\phi(t))=1$, and $q(\phi(h))=0$ if $h\in \ti{H}$. The restriction $q|_G$ verifies that its its kernel $\ker q|_G$ contains $H=\phi(\ti{H})$. 

We now consider the $\p G$-module 
\[M=\p G\tens{\p \ti{G}} \p I_{\ti{G}}.\]
We apply Proposition \ref{straugm}(2) in a similar way to how we obtained  (\ref{eq3}) to deduce the following isomorphism of $\p[G]$-modules:
\[M\cong \frac{\p I_{H}^G\oplus \p G}{\p G\, (\phi(v)-1-\phi(t)(\phi(u)-1),\,  \phi(v)-1)}.\]

Without loss of generality, we suppose that $u$ is not a proper power in $\ti{H}$. 
Since $\phi: \ti{H}\lrar H$ is an isomorphism, $H$ is parafree and $\phi (u)$ is not a proper power in $H$. From Proposition \ref{centralizer}, we deduce that that  $\lan \phi(u)\ran$ is a maximal abelian subgroup of $H$. By Proposition \ref{hnntor}, this implies that $M$ is $\D_{\p G}$-torsion-free.  

Using Lemma \ref{freequo}, similarly as in  (\ref{eq:lemskew}), we have the following isomorphisms of left $\D_{\p G}$-modules
\[\D_{\p G}\tens{\p G} M\cong \D_{\p G}\tens{\p G} \p I_H^G\cong \D_{\p G}\tens{\p H} \p I_H\cong \D_{\p G}\tens{\D_{\p H}} \left( \D_{\p H}\tens{\p H} \p I_H \right).\]
The combination of these isomorphisms with Proposition \ref{Bettibound} yields to
\[\dim_{\D_{\p G}} M= \dim_{\D_{\p H} }\p I_H=d(\ti{H}_{\hat{p}})=d(\ti{G}_{\hat{p}}).\]
We can apply Proposition \ref{pro-$p$ embedding lemma} to conclude that $\phi: \ti{G}\lrar G$ is injective.
\end{proof}

The last claim implies that $\ti{G}$ is residually nilpotent. Moreover, we already know that every pro-$p$ completion $\ti{G}_{\hat{p}}$ is free, so $\ti{G}$ is parafree by Proposition \ref{carparafree}. 
\end{proof}

We are now ready to prove Corollary \ref{hnncor}, where we the fourth condition of Theorem \ref{hnnparafree} is replaced by a very simple one.

\begin{proof}[Proof of Corollary \ref{hnncor}]
It is not hard to see that the four given conditions  imply the conditions of \ref{hnnparafreeintr}. In fact, under these assumptions,  the image of $u$ is non-trivial in the abelianisation of $W$.

In order to prove that they are necessary,  let us suppose that  $W$ is parafree. By Theorem \ref{hnnparafree}, all the properties $(1)-(3)$ hold. For the sake of a contradiction, we assume that $(4)$ does not hold. Then, by $(2)$, the image of $uv^{-1}$ in $U_{ \ab}$ generates a subgroup which contains both $u$ and $v$. So there exists $c\in \Z$ such that $u\equiv (uv^{-1})^c \mod [U, U]$. In addition, since $U_{\hp}$ is free of rank 2 and the image of $uv^{-1}$ is primitive in $U_{\hp}$; we can denote by $\NN$ the closed normal subgroup generated by $uv^{-1}$ in $U_{\hp}$ and observe that $U_{\hp}/\NN\cong \Z_p$. Thus $[U, U]\sub \NN$ and, in particular, $u\in  \NN$. 

Denote by $\NN_1$ the closed normal subgroup of $W_{\hp}$ generated by $uv^{-1}$. Then $u\in \NN_1$ by the previous argument. Now observe that $uv^{-1}=[u^{-1}, t^{-1}]$. We are going to verify that $u\in \bigcap_k \ga_k W_{\hp}=\{1\}.$ This would result in the contradiction $u=1$, finishing the proof. We proceed inductively to check the claim. The base of the induction is trivial because $u\in \ga_1 W_{\hp}=W_{\hp}$. If $u\in \ga_k  W_{\hp}$, then $uv^{-1}=[u^{-1}, t^{-1}]\in [\ga_k W_{\hp}, W_{\hp}]=\ga_{k+1} W_{\hp}$. Since $\ga_k W_{\hp}$ is a closed normal subgroup of $W_{\hp}$, then $\NN_1\sub \ga_{k+1} W_{\hp}$ and hence $u\in \ga_{k+1} W_{\hp}$, completing the inductive step. In conclusion, we have proven that $1\neq u\in W$ is trivial in $W_{\hp}$, which gives the desired contradiction.
\end{proof}

\subsection{The fundamental group of graph of groups} In this subsection we prove  Corollary \ref{fundamentalparafreeintr} from the introduction. Recall that this gives a general criterion for a graph of groups with cyclic edge groups to be parafree.

\begin{proof}[Proof of Corollary \ref{fundamentalparafreeintr}]

First, we assume that $W$ is parafree. We want to show that the four conditions of the corollary hold. The conditions (3) and (4) are a consequence of $W$ being parafree. Now we  show the conditions (1) and (2) arguing by induction on the number of edges $|E(\Gamma)|$.  If $|E(\Gamma)|=0$, the claim is obvious.

 Recall that every spanning tree of a simplicial graph $\Ga$ has $|V(\Ga)|-1$ edges, and hence $H_1(\Ga; \Q)$ has rank $1-\chi(\Ga).$ This explains the contribution of the underlying graph to the abelian rank in equation (2) of Corollary \ref{fundamentalparafreeintr}.

Now we assume that we have proved that  (1) and (2) hold if $|E(\Gamma)|\le n $ and we consider the case $|E(\Gamma)|=n+1$. 
Let $e\in E(\Gamma)$.

If $\Gamma \setminus \{e\}=\Delta_1\cup \Delta_2$ is disconnected then, by Proposition \ref{subgraph},
 $$W\cong U*_{\mathcal G(e)} V, \textrm{\ where\ }U\cong \pi_1(\mathcal G, \Delta_1) \textrm{\ and\ } V\cong \pi_1(\mathcal G, \Delta_2).$$
If $\mathcal G(e)=\{1\}$, then $W\cong U*V$, and so, since $W$ is parafree,  $U$ and $V$ are also parafree. If $\mathcal G(e)\cong \Z$, then $U$ and $V$ are parafree by Theorem \ref{amalgamparafree}. Since $|E(\Delta_1)|,|E(\Delta_2)|\le n $, we can apply the induction. Thus, the four conditions of Corollary \ref{fundamentalparafreeintr} hold for $U$ and $V$. This implies immediately that conditions (1) and (2) also hold  for $W$.

If $\Gamma \setminus \{e\}=\Delta$ is  connected, then, by Proposition \ref{subgraph},
 $$W\cong U*_{t_e} \textrm{\ where\ }U\cong \pi_1(\mathcal G, \Delta).$$
If $\mathcal G(e)=\{1\}$, then  $W\cong U*\Z$, and so,  since $W$ is parafree, $U$ is also  parafree. If $\mathcal G(e)\cong \Z$, then $U$ is  parafree by Theorem \ref{hnnparafree}.
Since $|E(\Delta)|\le n $, we can apply the induction. Thus, the four conditions of Corollary \ref{fundamentalparafreeintr} holds for $U$. This implies immediately that the conditions (1) and (2) holds also  for $W$.

Now we assume that the four conditions of the corollary hold. We want to show that $W$ is parafree. We will argue by induction on the number of edges $|E(\Gamma)|$.  If $|E(\Gamma)|=0$, the claim is obvious.

For the induction hypothesis, we suppose that we have shown that  $W$ is parafree  if $|E(\Gamma)|\le n $ and  we consider the case $|E(\Gamma)|=n+1$. Let $e\in E(\Gamma)$. 

If $\Gamma \setminus \{e\}=\Delta_1\cup \Delta_2$ is disconnected and so $\chi(\Gamma)=\chi(\Delta_1)+\chi(\Delta_2)-1.$ Furthermore, by Proposition \ref{subgraph},
 $$W\cong U*_{\mathcal G(e)} V, \textrm{\ where\ }U\cong \pi_1(\mathcal G, \Delta_1) \textrm{\ and\ } V\cong \pi_1(\mathcal G, \Delta_2).$$

 We have that
\begin{multline*}
\abr(W)\ge \abr(U)+\abr(V)-\abr(\mathcal G(e))\ge\\
 \sum_{v\in V(\Delta_1)}\abr(\mathcal G(v))-\sum_{f\in E(\Delta_1)} \abr(\mathcal G(f))+1 -\chi(\Delta_1)+\\
\sum_{v\in V(\Delta_2)}\abr(\mathcal G(v))-\sum_{f\in E(\Delta_2)} \abr(\mathcal G(f)) +1-\chi(\Delta_2) -\abr(\mathcal G(e))= \\
 \sum_{v\in V(\Gamma)}\abr(\mathcal G(v))-\sum_{f\in E(\Gamma)} \abr(\mathcal G(f)) +1-\chi(\Gamma)
 \end{multline*}
 
Since $\abr(W)= \sum_{v\in V(\Gamma)}\abr(\mathcal G(v))-\sum_{f\in E(\Gamma)} \abr(\mathcal G(f))+1 -\chi(\Gamma)$, we obtain that
\begin{multline*}
\abr(U)=\sum_{v\in V(\Delta_1)}\abr(\mathcal G(v))-\sum_{f\in E(\Delta_1)} \abr(\mathcal G(f)) +1-\chi(\Delta_1),\\
\abr(V)=\sum_{v\in V(\Delta_2)}\abr(\mathcal G(v))-\sum_{f\in E(\Delta_2)} \abr(\mathcal G(f))+1 -\chi(\Delta_2)  \textrm{\ and\ }\\
\abr(W)=\abr(U)+\abr(V)-\abr(\mathcal G(e)).
\end{multline*}
Thus,  $U$ and $V$ satisfy the four conditions of the corollary and, by the  inductive hypothesis, they are parafree. If $\mathcal G(e)=\{1\}$, then $W\cong U*V$, and so,  $W$ is parafree. If $\mathcal G(e)\cong \Z$, then $W$ is parafree by Theorem \ref{amalgamparafree} and the remark afterwards.

 If $\Gamma \setminus \{e\}=\Delta$ is  connected then $\chi(\Gamma)=\chi(\Delta)+1.$ by Proposition \ref{subgraph},
 $$W\cong U*_{t_e} \textrm{\ where\ }U\cong \pi_1(\mathcal G, \Delta).$$
 
 We have that
\begin{multline*}
\abr(W)\ge \abr(U)-\abr(\mathcal G(e))+1\ge\\
 \sum_{v\in V(\Delta)}\abr(\mathcal G(v))-\sum_{f\in E(\Delta)} \abr(\mathcal G(f))+1 -\chi(\Delta) -\abr(\mathcal G(e))+1= \\
 \sum_{v\in V(\Gamma)}\abr(\mathcal G(v))-\sum_{f\in E(\Gamma)} \abr(\mathcal G(f))+1 -\chi(\Gamma).
 \end{multline*}
 
By assumption, $\abr(W)= \sum_{v\in V(\Gamma)}\abr(\mathcal G(v))-\sum_{f\in E(\Gamma)} \abr(\mathcal G(f))+1 -\chi(\Gamma)$. So we deduce that
\begin{multline*}
\abr(U)=\sum_{v\in V(\Delta)}\abr(\mathcal G(v))-\sum_{f\in E(\Delta)} \abr(\mathcal G(f)) -\chi(\Delta) \textrm{\ and\ }\\
\abr(W)=\abr(U)-\abr(\mathcal G(e))+1.\end{multline*}
Thus, $U$ satisfies the four conditions of the corollary and, by the  inductive hypothesis, it is parafree. If $\mathcal G(e)=\{1\}$, then $W\cong U*\Z$, and so,  $W$ is parafree. If $\mathcal G(e)\cong \Z$, then $W$ is parafree by Theorem \ref{hnnparafree} and the remark afterwards.

\end{proof}

\end{document}